\journal{Elsevier}
\newtheorem{definition}{Definition}[section]
\newtheorem{proposition}[definition]{Proposition}
\newtheorem{lemma}[definition]{Lemma}
\newtheorem{theorem}[definition]{Theorem}
\newtheorem{corollary}[definition]{Corollary}
\newdefinition{example}[definition]{Example}
\newdefinition{remark}[definition]{ Remark}
\newdefinition{problem}[definition]{ Problem}
\newdefinition{question}[definition]{Question}
\newdefinition{fact}[definition]{Fact}
\newproof{pot}{Proof}
\begin{document}

\begin{frontmatter}
\title{About the uniqueness of the hyperspaces $C(p,X)$ in some classes of continua}

\author{Florencio~Corona--V\'azquez}
\ead{florencio.corona@unach.mx}

\author{Russell~Aar\'on~Qui\~nones--Estrella \corref{cor1}}
\ead{rusell.quinones@unach.mx}

\author{Javier~S\'anchez--Mart\'inez}
\ead{jsanchezm@unach.mx}

\cortext[cor1]{Corresponding author}

\address{Universidad Aut\'onoma de Chiapas, Facultad de Ciencias en F\'isica y Matem\'aticas, Carretera Emiliano Zapata Km. 8, Rancho San Francisco, Ter\'an, C.P. 29050, Tuxtla Guti\'errez, Chiapas, Mexico.}

\begin{abstract}
Given a continuum $X$ and $p\in X$, we will consider the hyperspace $C(p,X)$ of all subcontinua of $X$ containing $p$. Given a family of continua $\mathcal{C}$, a continuum $X\in\mathcal{C}$ and $p\in X$, we say that $(X,p)$ has unique hyperspace $C(p,X)$ relative to $\mathcal{C}$ if for each $Y\in\mathcal{C}$ and $q\in Y$ such that $C(p,X)$ and $C(q,Y)$ are homeomorphic, then there is an homeomorphism between $X$ and $Y$ sending $p$ to $q$. In this paper we show that $(X,p)$ has unique hyperspace $C(p,X)$ relative to the classes of dendrites if and only if $X$ is a tree, we present also some classes of continua without unique hyperspace $C(p,X)$; this answer some questions posed in \cite{Corona.et.al(2019)}. 
\end{abstract}

\begin{keyword}
Continua \sep dendrites   \sep hyperspaces.
\MSC Primary 
\sep 54B05 
\sep 54B20 
\sep 54F65 
\end{keyword}

\end{frontmatter}
\section{Introduction}
A \textit{continuum} is a nonempty compact connected metric space. Given a continuum $X$, by a \textit{hyperspace} of $X$ we mean a specified collection of subsets of $X$. In the literature, some of the most studied hyperspaces are the following:   
\begin{align*}
2^{X} & = \{ A \subset X : A \text{ is nonempty and closed}\},        \\
C(X) & = \{ A \subset X : A \text{ is nonempty, connected and closed}\}, \\
C(P,X) & = \{A\in C(X) : P\subset A\} \textrm{, where $P\in 2^X$}.
\end{align*}

The collection $2^{X}$ is called the \textit{hyperspace of closed subsets} of $X$ whereas that $C(X)$ is called the \textit{hyperspace of subcontinua} of $X$. These hyperspaces are endowed with the Hausdorff metric, see \cite[p. 1]{Nadler(1978)}. The main objects of interest in this paper are the hyperspaces $C(p,X):=C(\{p\},X)$, where $p\in X$. 
 

In general, given a hyperspace $\mathcal{H}(X)\in \{2^{X},C(X)\}$, $X$ is said to \textit{have unique hyperspace $\mathcal{H}(X)$} if for each continuum $Y$ such that $\mathcal{H}(X)$ is homeomorphic to $\mathcal{H}(Y)$, it holds that $X$ is homeomorphic to $Y$. In a similar setting, the following concept was defined in \cite{Corona.et.al(2019)}: Given a class of continua $\mathcal{C}$, a continuum $X\in\mathcal{C}$ and $p\in X$, $(X,p)$ is said to \textit{have unique hyperspace $C(p,X)$ in (or relative to) $\mathcal{C}$} if for each $Y\in\mathcal{C}$ and $q\in Y$ such that $C(p,X)$ is homeomorphic to $C(q,Y)$, there is an homeomorphism $h:X\to Y$ such that $h(p)=q$.

By a \textit{finite graph} we mean a continuum $X$ which can be written as the union of finitely many arcs, any two of which are either disjoint or intersect only in one or both of their end points. A \textit{tree} is a finite graph without simple closed curves. A \textit{dendrite} is a locally connected continuum without simple closed curves. 

The main result in  \cite{Corona.et.al(2019)} is the following:

\begin{theorem}\cite[Theorem~4.14]{Corona.et.al(2019)}\label{trees}
Let $X$ be a tree and $p\in X$. Then $(X,p)$ has unique hyperspace $C(p,X)$ in the class of trees.
\end{theorem}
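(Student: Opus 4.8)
\emph{The plan.} I want to show that the homeomorphism type of $C(p,X)$ recovers the pointed homeomorphism type of $(X,p)$. Since a tree is determined up to homeomorphism by its underlying simplicial tree (vertices at the branch points and end points), and a marked tree $(X,p)$ is then determined by recording in addition the location of $p$ (a vertex, or a point in the interior of an edge) up to graph automorphism, it is enough to reconstruct this finite combinatorial datum from topological invariants of $C(p,X)$. As raw material I would use: (i) $C(p,X)$ is a finite-dimensional compact AR, indeed a finite polyhedron, which decomposes canonically as a finite union of cells indexed by the combinatorial types of the subcontinua of $X$ containing $p$; (ii) the map $\sigma_p\colon X\to C(p,X)$, $x\mapsto [p,x]$ (the unique arc in $X$ from $p$ to $x$, with $[p,p]=\{p\}$), is an embedding taking $p$ to $\{p\}$, so a copy $\Gamma_p$ of $X$ sits inside $C(p,X)$; (iii) $C(p,X)$ carries the containment order, with least element $\{p\}$, greatest element $X$, and order arcs between comparable elements.

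\emph{Key steps.} (1) Compute $\dim C(p,X)$, which equals $|\mathrm{End}(X)|$ when $p\notin\mathrm{End}(X)$ and $|\mathrm{End}(X)|-1$ when $p\in\mathrm{End}(X)$; together with (2) this recovers the number of end points of $X$. (2) For each $A\in C(p,X)$ extract, from the local topology at $A$ (its local dimension and the homeomorphism type of a small punctured neighbourhood), the number of directions in which $A$ can be enlarged and reduced, and — through the failure of $C(p,X)$ to be locally Euclidean at $A$ — the presence and orders of the branch points of $X$ contained in $A$; in particular the link of $\{p\}$ is an $(\mathrm{ord}_X(p)-1)$-simplex and that of $X$ an $(|\mathrm{End}(X)|-1)$-simplex. (3) Argue by induction on $|\mathrm{End}(X)|$: the base case is the arc, where $C(p,X)$ is an arc if $p$ is an end point and a $2$-cell otherwise; for the inductive step choose an end point $e\neq p$ of $X$, let $b$ be the nearest branch point to $e$ and $X'=X\setminus(b,e]$, and use the decomposition $C(p,X)=C(p,X')\cup\bigl(C(\{p,b\},X')\times[0,1]\bigr)$, glued along $C(\{p,b\},X')\times\{0\}\subseteq C(p,X')$, to single out a copy of $C(p,X')$ inside $C(p,X)$ and invoke the inductive hypothesis, then reattach the pruned free edge. (4) Once $(X,p)$ and $(Y,q)$ are seen to have the same combinatorial type, realize the corresponding combinatorial isomorphism by a homeomorphism $h\colon X\to Y$ with $h(p)=q$.

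\emph{The main obstacle.} The delicate point is establishing that the combinatorial type of $(X,p)$ is a genuine topological invariant of $C(p,X)$ — that is, that everything in step (3) can be phrased using homeomorphism-invariant data only. The difficulty is that $C(p,X)$ usually has self-homeomorphisms not induced by any homeomorphism of $X$: already when $X$ is an arc and $p$ is interior, $C(p,X)$ is a square whose diagonal symmetry interchanges $\{p\}$ and $X$, so neither $\{p\}$ nor the copy $\Gamma_p$ can be distinguished inside $C(p,X)$. Hence the reconstruction must extract only those features that are stable under such symmetries, and one must separately verify that whatever residual ambiguity remains always corresponds to an actual pointed homeomorphism of the underlying trees. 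Making the pruning step in (3) topologically canonical — recognizing $C(p,X')$ inside $C(p,X)$ intrinsically, and tracking how the position of $p$ interacts with the chosen end point and with branch points of different orders — is where essentially all the casework lives; I expect this to force a simultaneous induction controlling the hyperspaces $C(F,X)$ for arbitrary finite $F\subseteq X$, not merely for singletons.
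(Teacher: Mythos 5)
First, a point of order: this paper does not prove the statement at all --- it is imported verbatim as \cite[Theorem~4.14]{Corona.et.al(2019)} and used as a black box --- so there is no internal proof to measure your argument against; it can only be judged on its own. On those terms, your proposal is a reasonable reconstruction strategy with essentially the correct raw material (finite polyhedral cell structure of $C(p,X)$, the embedding $x\mapsto\overline{px}$, local dimension and links at $\{p\}$ and at $X$, induction by pruning a free edge), and you correctly identify the real danger: $C(p,X)$ admits self-homeomorphisms not induced by pointed homeomorphisms of $X$, e.g.\ the diagonal flip of the square interchanging $\{p\}$ and $X$ when $X$ is an arc and $p$ is interior. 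But that danger \emph{is} the theorem, and your write-up defers it rather than overcoming it. Step (2) extracts ``the number of directions in which $A$ can be enlarged and reduced'' and ``the branch points of $X$ contained in $A$'' from the local topology at $A$, but you never prove that these quantities are invariants of the homeomorphism type of $C(p,X)$ (enlarging and reducing are order-theoretic notions, and the containment order is not preserved by an arbitrary homeomorphism $C(p,X)\to C(q,Y)$). Step (3) uses the decomposition $C(p,X)=C(p,X')\cup\bigl(C(\{p,b\},X')\times[0,1]\bigr)$, which is defined in terms of $X$ and a chosen end edge, not intrinsically in $C(p,X)$; to run the induction you would have to show that a homeomorphism $C(p,X)\to C(q,Y)$ carries some such decomposition of the first hyperspace to one of the second \emph{and} matches the reattaching data, and nothing in the proposal does this. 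You say yourself that ``essentially all the casework lives'' there, so what you have is a plan whose central step is still open, not a proof.

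Two smaller inaccuracies to repair if you pursue this route: the link of the element $X$ in $C(p,X)$ is an $\bigl(|E(X)\setminus\{p\}|-1\bigr)$-simplex, so your count is off by one when $p$ is an end point; and in the pruning step you must forbid choosing the end edge whose interior contains $p$, since then $p\notin X'$ and $C(p,X')$ is not defined.
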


In the same paper the authors posed the following questions:

\begin{question}\label{dendrites}
If $X$ is a dendrite and $p\in X$, has $(X,p)$ unique hyperspace $C(p,X)$ in the class of dendrites?
\end{question}

\begin{question}\label{continua}
Are there a continuum $X$ and $p\in X$ such that $(X,p)$ has unique hyperspace in the class of continua?
\end{question}

In this paper we give a negative answer to the Question \ref{dendrites}, nevertheless we show that, in the class of dendrites, the trees are the only continua having unique hyperspace $C(p,X)$ (see Theorem \ref{mainresult} below). We present also partial solutions to Question \ref{continua} by showing some classes of continua without unique hyperspace $C(p,X)$.

\section{Definitions and preliminaries}

In this paper, \textit{dimension} means inductive dimension as defined in \cite[(0.44), p.~21]{Nadler(1978)}, following the author of that book, we will denote by $\dim(X)$ and $\dim_{p}(X)$ the dimension of the space $X$ and the dimension of $X$ at $p$, respectively. For a subset $A$ of $X$ we denote by $|A|$ the cardinality of $A$
and we use, as customary, the symbols $\textrm{Int}_{X}(A)$, $Cl_{X}(A)$ and $Fr_{X}(A)$ to denote the interior, closure and boundary of $A$ in $X$, respectively. If there is no confusion, we will write simply $\textrm{Int}(A)$, $Cl(A)$ and $Fr(A)$. We will use the symbol $\mathbb{N}$ to denote the set of all positive integers.

Given a continuum $X$, $p\in X$ and $\beta$ be a cardinal number, we say that $p$ \textit{is of order less than or equal to $\beta$ in $X$}, written $\textrm{ord}(p,X)\leq \beta$, provided that for each open subset $U$ of $X$ containing $p$, there exists an open subset $V$ of $X$ such that $p\in V\subset U$ and $|Fr(V)|\leq \beta$. We say that $p$ \textit{is the order $\beta$ in $X$}, written $\textrm{ord}(p,X)=\beta$ provided that $\textrm{ord}(p,X)\leq \beta$ and $\textrm{ord}(p,X)\nleq \alpha$ for any cardinal number $\alpha<\beta$. Since in this paper we are interested only in the finite and countable cases, we will write $\textrm{ord}(p,X)<\infty$ if $\textrm{ord}(p,X)$ is an integer and $\text{ord}(p,X)=\infty$ in any other case (no matter wich infinite cardinal number is about). 

It is well known that, if $Z$ is a finite graph and $z\in Z$, then $\textrm{ord}(z,Z)< \infty$ (see \cite[Theorem~9.10, p.~144]{Nadler(1992)}) and if $Y$ is a dendrite and $y\in Y$, then $\textrm{ord}(y,Y)\leq |\mathbb{N}|=\aleph_{0}$ (see \cite[Corollary~10.20.1, p.~173]{Nadler(1992)}).

As customary, for a topological space $Z$, $\pi _{0}(Z)$ denote the set of connected components of $Z$, also, if $Z$ is connected, it is defined the \textit{component number of $z$ in $Z$}, written $c(z,Z)$, as the cardinality of $\pi_{0}(Z-\{z\})$ (see \cite[Definition 10.11]{Nadler(1992)}). Whit this notation, by \cite[Theorem 10.13, 170]{Nadler(1992)}, if $X$ is a dendrite and $x\in X$ then $c(x,X)=\textrm{ord}(x,X)$. In the case that $\textrm{ord}(x,X)=1$ we say that $x$ is an \textit{end point of $X$}, if $\textrm{ord}(x,X)\geq 3$ we say that $x$ is a \textit{ramification point of $X$}. We will denote by $E(X)$ and $R(X)$, respectively, the set of all end points and the set of all ramification points in $X$.

An \textit{arc} is any space which is homeomorphic to the closed interval $[0,1]$. A \textit{dendroid} is an arcwise continuum $X$, such that for each $A,B\in C(X)$ holds $A\cap B\in C(X)$. In this paper, a dendroid $X$ is say to be \textit{smooth at $p\in X$} if satisfies the definition given in \cite[p. 298]{CharatonikEberhart(1970)}.  

We denote by $\mathcal{T}$ and $\mathcal{D}$ the classes of all trees and all dendrites, respectively. By \cite[Proposition~9.4, p.~142]{Nadler(1992)}, each finite graph is locally connected and then each tree is a dendrite, thus $\mathcal{T}\subset \mathcal{D}$. 

We denote by $\mathcal{Q}$ to the Hilbert Cube, i.e. $\mathcal{Q}$ is the product of infinite countable many copies of the unit interval $[0,1]$ with the product topology.

\section{Dendrites with unique hyperspace $C(p,X)$}

We begin this section with a characterization of trees in the class of dendrites using the structure of the hyperspaces $C(p,X)$.

\begin{theorem}\label{trees_dendrites}
Let $X$ be a dendrite. The following conditions are equivalent:
\begin{enumerate}
\item There exists $p\in X$ such that $\dim C(p,X)<\infty$.
\item $X$ satisfies the next two conditions:
\begin{itemize}
\item $\textrm{ord}(x,X)<\infty$ for all $x\in X$;
\item $\textrm{ord}(x,X)\leq 2$ for all but finitely many $x\in X$.
\end{itemize}
\item $X$ is a tree.
\item $\dim C(x,X)<\infty$ for each $x\in X$.
\end{enumerate}
\end{theorem}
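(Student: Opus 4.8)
The plan is to establish the cycle of implications $(3)\Rightarrow(4)\Rightarrow(1)\Rightarrow(2)\Rightarrow(3)$, which is the most economical route. The implication $(3)\Rightarrow(4)$ should be essentially known: if $X$ is a tree then for every $x\in X$ the hyperspace $C(x,X)$ is a finite-dimensional polyhedron (this is part of the structure theory of $C(G)$ for finite graphs $G$, and $C(p,X)$ sits inside $C(X)$ as a subpolyhedron cut out by the condition $p\in A$); in particular $\dim C(x,X)<\infty$. The implication $(4)\Rightarrow(1)$ is trivial, since $X\neq\emptyset$ provides at least one such point $p$.

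For $(1)\Rightarrow(2)$ I would argue contrapositively, exhibiting, from a violation of either bullet, an embedded copy of the Hilbert cube (or at least a set of arbitrarily large finite dimension) inside $C(p,X)$ for \emph{every} $p$. If some point $x_0$ has $\textrm{ord}(x_0,X)=\infty$, then arbitrarily small neighborhoods of $x_0$ contain a copy of an infinite star (an $n$-od for every $n$), and one can use the standard device: pick the unique arc $[p,x_0]$ from $p$ to $x_0$, and attach to its endpoint $x_0$ the free arcs $J_1,J_2,\dots$ emanating from $x_0$; then the subcontinua $[p,x_0]\cup\bigcup_{i} A_i$ with $A_i$ a subarc of $J_i$ having $x_0$ as an endpoint form a subspace of $C(p,X)$ homeomorphic to $\prod_i [0,1]=\mathcal{Q}$, forcing $\dim C(p,X)=\infty$. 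If instead every point has finite order but infinitely many points $x_1,x_2,\dots$ have $\textrm{ord}(x_i,X)\geq 3$, then each $x_i$ carries a third free arc $K_i$ off the ``spine'' tree, and by choosing the $x_i$ so that the arcs $[p,x_i]$ together form a dendrite, the subcontinua obtained by freely growing a subarc of each $K_i$ from $x_i$ again sweep out a Hilbert-cube factor in $C(p,X)$. The key technical care here is to choose the witnessing arcs to be pairwise ``almost disjoint'' (meeting only along the tree connecting them to $p$) so that the growing parameters are genuinely independent; Nadler's characterization $\textrm{ord}=c(\cdot,X)$ for dendrites and local connectedness make such choices available.

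The implication $(2)\Rightarrow(3)$ is where I expect the real work, though it is a reasonably classical fact about dendrites. Condition (2) says $X$ is a dendrite in which every point has finite order and all but finitely many points have order $\leq 2$. Let $F$ be the finite set of points of order $\geq 3$, i.e. $F=R(X)$, which is finite by hypothesis. Since $X$ is a dendrite, $E(X)$ could a priori be infinite, so the crux is to show finiteness of $E(X)$ as well, after which $X=\bigcup\{[e,e']:e,e'\in E(X)\}$ or more carefully the union of the finitely many arcs joining consecutive points of $F\cup E(X)$ exhibits $X$ as a finite graph with no simple closed curve, i.e. a tree. To rule out infinitely many end points: if $E(X)$ were infinite, then (by compactness) it would have an accumulation point $z\in X$; one shows that such a $z$ must have infinite order, contradicting the first bullet of (2). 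Indeed, in a dendrite the arcs $[z,e]$ for distinct $e\in E(X)$ near $z$ branch off from each other, producing infinitely many components of $X-\{z\}$ in every neighborhood of $z$, so $c(z,X)=\infty=\textrm{ord}(z,X)$. This uses the unique arcwise connectedness of dendrites and the monotone-retraction structure (first-point maps onto subdendrites). Assembling: $R(X)$ finite plus $E(X)$ finite gives that $X$ is the union of finitely many arcs meeting only at endpoints and containing no simple closed curve, hence a tree, completing the cycle.

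The main obstacle, then, is the careful dendrite-topology bookkeeping in $(2)\Rightarrow(3)$ — specifically proving that an accumulation point of $E(X)$ is forced to have infinite order — together with making the Hilbert-cube embeddings in $(1)\Rightarrow(2)$ rigorous, i.e. verifying that the ``grow a subarc on each free arc independently'' map is a well-defined embedding into $C(p,X)$ with respect to the Hausdorff metric. Neither step is deep, but both require invoking the standard structure theory of dendrites (unique arcs, first-point retractions, $\textrm{ord}=c(\cdot,X)$) that is already cited in the preliminaries.
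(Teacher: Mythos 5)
Your cycle $(3)\Rightarrow(4)\Rightarrow(1)\Rightarrow(2)\Rightarrow(3)$ and the Hilbert-cube embedding for $(1)\Rightarrow(2)$ are essentially the paper's proof: the paper builds the embedding $\mathcal{Q}\to C(p,X)$ from order arcs $\alpha_i$ running from $\{x\}$ to $A_i\cup\{x\}$ over the components $A_i$ of $X-\{x\}$, evaluated at $t_i/2^i$ to force continuity, and for the second bullet it gets the ``infinitely many independent branches'' by quoting \cite[Lemma~9.11]{Nadler(1992)} (a subcontinuum $K$ with $\textrm{ord}(K,X)=\infty$). That independence is exactly the point you flag as ``key technical care'' but do not supply; it is a real lemma, not bookkeeping, since the convex hull of $p$ and the chosen ramification points may swallow every branch at each of them unless the points are selected carefully.

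The genuine gap is in your argument for $(2)\Rightarrow(3)$. The assertion that an accumulation point $z$ of $E(X)$ satisfies $c(z,X)=\infty$ because ``the arcs $[z,e]$ branch off from each other, producing infinitely many components of $X-\{z\}$'' is false for dendrites in general: in the null comb $W$ of Example~\ref{examples}, the tips of the teeth are end points accumulating at $w=(0,0)$, yet $\textrm{ord}(w,W)=1$ and $W-\{w\}$ is connected --- the arcs from $w$ to the tips branch off from one another at the ramification points $(1/n,0)$, not at $w$. So the lemma you invoke is not a consequence of unique arcwise connectedness alone; it must use the first bullet's finiteness of $R(X)$, which your justification never touches. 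A correct repair: for distinct end points $e_i,e_j$ near $z$, the point where $[z,e_i]$ and $[z,e_j]$ diverge lies in the finite set $R(X)\cup\{z\}$; by Ramsey's theorem for pairs there is an infinite subfamily all pairwise diverging at one common point $\tilde w$, and then the arcs $[\tilde w,e_i]$ meet pairwise only at $\tilde w$, so $c(\tilde w,X)=\infty$, contradicting the first bullet. (One still has to check afterwards that a dendrite with $E(X)$ and $R(X)$ both finite equals the union of the finitely many arcs joining these points.) The paper avoids the entire issue by citing \cite[Theorem~9.10]{Nadler(1992)}, which is verbatim the equivalence of condition (2) with being a finite graph; given that $X$ is a dendrite, hence contains no simple closed curve, this yields (3) in one line. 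As written, your step rests on a false general claim, so it is a genuine gap rather than an omitted routine verification.
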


\begin{proof}
To see $1.$ implies $2.$, suppose by the contrary that there exists $x\in X$ such that $\textrm{ord}(x,X)=\infty$. By \cite[Corollary 3.12, p. 1006]{Pellicer(2007)},  $x\neq p$. Since $X-\{x\}$ has countable many components $\{A_{i}\}_{i\in\mathbb{N}\cup \{0\}}$ (we can assume that $p\in A_{0}$) and for each $i\in\mathbb{N}$, $A_{i}\cup\{x\}\in C(X)$, we can consider a countable many order arcs $\alpha_{i}:[0,1]\to C(X)$ such that $\alpha_{i}(0)=\{x\}$ and $\alpha_{i}(1)=A_{i}\cup \{x\}$. Let $f:\mathcal{Q}\to C(p,X)$ given by $$f((t_{i})_{i\in\mathbb{N}})=A_{0}\cup \bigcup^{\infty}_{i=1}\alpha_{i}\left(\frac{t_{i}}{2^{i}}\right), \textrm{ for each $(t_{i})_{i\in\mathbb{N}}\in \mathcal{Q}$}.$$
It is easy to see that $f$ is an embedding and therefore $\dim C(p,X)=\infty$, which is a contradiction.\\
To see the second point, we will proceed again by contradiction. We suppose that there exists a sequence $\{x_{n}\}_{n\in\mathbb{N}}$ of different points in $X$ such that $\textrm{ord}(x_{i},X)\geq 3$ for each $n\in\mathbb{N}$. By \cite[Lemma 9.11, p. 145]{Nadler(1992)}, there exists a subcontinuum $K$ of $X$ such that $\textrm{ord}(K,X)=\infty$. Using the same idea of the first part, replacing $K$ by $\{x\}$ we have that $\dim C(p,X)=\infty$, which is impossible.

By \cite[Theorem 9.10, p. 144]{Nadler(1992)} we have that $2.$ and $3.$ are equivalent.

To see, $3.$ implies $4.$ suppose that $X$ is a tree. In this case, if $n=\sum_{q\in R(X)}\textrm{ord}(q,X)$, then using \cite[Theorem 3.11 and Corollary 3.12]{Pellicer(2007)} we have that $\dim C(x,X)\leq n$ for each $x\in X$.

Finally $4.$ implies trivially $1.$
\end{proof}

Now, we show an extension of Theorem \ref{trees}.

\begin{lemma}\label{uniquetrees}
Let $X\in\mathcal{T}$ and $p\in X$. Then $(X,p)$ has unique hyperspace $C(p,X)$ relative to $\mathcal{D}$. 
\end{lemma}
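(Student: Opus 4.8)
The plan is to reduce the relative-uniqueness statement over $\mathcal{D}$ to the already-known relative-uniqueness over $\mathcal{T}$ (Theorem~\ref{trees}), by showing that the class $\mathcal{T}$ is ``detectable'' from the hyperspace $C(p,X)$. Concretely, suppose $X\in\mathcal{T}$, $p\in X$, and let $Y\in\mathcal{D}$, $q\in Y$ be such that $C(p,X)$ is homeomorphic to $C(q,Y)$. Since $X$ is a tree, Theorem~\ref{trees_dendrites} (the implication $3.\Rightarrow 4.$, or just condition $1.$) gives $\dim C(p,X)<\infty$; hence $\dim C(q,Y)<\infty$ as well. Applying Theorem~\ref{trees_dendrites} to the dendrite $Y$ (the implication $1.\Rightarrow 3.$), we conclude that $Y$ is itself a tree. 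Thus both $X$ and $Y$ lie in $\mathcal{T}$, $C(p,X)\cong C(q,Y)$, and Theorem~\ref{trees} applies verbatim to furnish a homeomorphism $h:X\to Y$ with $h(p)=q$. That completes the argument.

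The key steps in order are: (i) invoke Theorem~\ref{trees_dendrites} to see that $X$ a tree forces $\dim C(p,X)<\infty$; (ii) transport finite-dimensionality across the hyperspace homeomorphism $C(p,X)\cong C(q,Y)$, using that topological dimension is a topological invariant; (iii) invoke Theorem~\ref{trees_dendrites} again, now in the direction ``finite-dimensional $C(q,Y)$ for a dendrite $Y$ implies $Y$ is a tree''; (iv) apply Theorem~\ref{trees} to the pair of trees. Each of these is a direct citation of a result already available in the excerpt, so the proof is essentially an assembly.

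The only point that needs a word of care — and the closest thing to an obstacle — is step (ii): one must be sure that the hypothesis in Theorem~\ref{trees_dendrites} is phrased as ``there exists $p$ with $\dim C(p,X)<\infty$'' (it is, condition~$1.$), so that knowing $\dim C(q,Y)<\infty$ for the \emph{specific} point $q$ already triggers the conclusion that $Y$ is a tree; no uniformity over all points of $Y$ is required at this stage. After that, invoking Theorem~\ref{trees} is purely formal. I do not anticipate any genuine difficulty: the real content was front-loaded into Theorem~\ref{trees_dendrites} and into Theorem~\ref{trees}, and this lemma is the clean corollary that packages them together.
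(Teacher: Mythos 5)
Your proposal is correct and matches the paper's own proof essentially verbatim: both use Theorem~\ref{trees_dendrites} to deduce $\dim C(p,X)<\infty$, transfer this to $C(q,Y)$ via the homeomorphism, conclude $Y\in\mathcal{T}$, and then invoke Theorem~\ref{trees}. No further comment is needed.
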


\begin{proof}
Let $Y\in\mathcal{D}$ and let $q\in Y$ such that $C(p,X)$ is homeomorphic to $C(q,Y)$. Since $\dim C(q,Y)=\dim C(p,X)<\infty$, by Proposition \ref{trees_dendrites}, we obtain that $Y\in \mathcal{T}$. From Theorem \ref{trees}, we conclude that there exists an homeomorphism between $X$ and $Y$ sending $p$ to $q$. 
\end{proof}

Similarly to Theorem \ref{trees_dendrites}, in the following result we present another characterization of the trees in the class of dendrites by using dimension of $C(X)$.

\begin{theorem}
Let $X$ be a dendrite. The following conditions are equivalent:
\begin{enumerate}
\item $\dim C(X)<\infty$,
\item $X$ is a tree,
\item $\dim C(p,X)<\infty$ for some $p\in X$,
\item $\dim C(p,X)<\infty$ for each $p\in X$.
\end{enumerate}
\end{theorem}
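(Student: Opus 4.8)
The plan is to leverage the already-proven Theorem~\ref{trees_dendrites}, which establishes the equivalence of the conditions ``$\dim C(p,X)<\infty$ for some $p$'', ``$\dim C(p,X)<\infty$ for each $p$'', and ``$X$ is a tree'' for a dendrite $X$. Consequently, the only genuinely new content in this theorem is the equivalence of the condition $\dim C(X)<\infty$ with those three. So I would reduce the problem to showing, say, that $(1)$ implies $(3)$ and that $(2)$ implies $(1)$, and then close the cycle using the previous theorem.

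First I would prove $(2)\Rightarrow(1)$. Suppose $X$ is a tree. Here I would use the known fact that for a finite graph $Z$, the hyperspace $C(Z)$ is finite-dimensional; more precisely, one can bound $\dim C(X)$ in terms of the combinatorial data of the tree. A clean way is to recall that $C(X)$ can be written as a finite union of cells (for a finite graph this is standard; see Nadler's book on hyperspaces, where $C(X)$ for a graph is a finite-dimensional polyhedron), or alternatively to invoke the bound from \cite{Pellicer(2007)} together with the observation that $C(X)=\bigcup_{x\in X}C(x,X)$ is covered by finitely many of the sets $C(q,X)$ with $q$ ranging over the (finitely many) ramification points and end points, each of dimension at most $n=\sum_{q\in R(X)}\textrm{ord}(q,X)$ as in the proof of Theorem~\ref{trees_dendrites}. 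By the countable sum theorem for dimension (or just finite sum theorem here), $\dim C(X)\le n<\infty$.

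Next I would prove $(1)\Rightarrow(3)$, or more directly its contrapositive combined with the previous theorem. If $X$ is a dendrite that is not a tree, then by Theorem~\ref{trees_dendrites} (the negation of condition $(2)$ there), either some point has infinite order or infinitely many points have order $\ge 3$. In the proof of Theorem~\ref{trees_dendrites} it was shown that in either case there is an embedding of the Hilbert cube $\mathcal{Q}$ into $C(p,X)$ for the appropriate $p$; but $C(p,X)\subset C(X)$ as a subspace, so the same embedding lands in $C(X)$, giving $\dim C(X)=\infty$. Thus $(1)$ fails whenever $X$ is not a tree, i.e.\ $(1)\Rightarrow(2)$.

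Finally I would assemble the cycle: $(1)\Rightarrow(2)$ just shown; $(2)\Rightarrow(3)$ and $(3)\Leftrightarrow(4)\Leftrightarrow(2)$ are all contained in Theorem~\ref{trees_dendrites}; and $(2)\Rightarrow(1)$ shown above. I do not expect any serious obstacle here: the engine is Theorem~\ref{trees_dendrites}, and the only new ingredient is the finite-dimensionality of $C(X)$ for a tree. The mild technical point to get right is the bound in $(2)\Rightarrow(1)$ — making sure the covering of $C(X)$ by finitely many finite-dimensional pieces $C(q,X)$ is correct (every subcontinuum of a tree contains at least one of its finitely many ramification-or-end points, so the union is genuinely all of $C(X)$), and citing the finite sum theorem for dimension. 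That is the step I would write out most carefully.
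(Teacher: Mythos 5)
Your overall route is the same as the paper's: both proofs reduce everything to Theorem~\ref{trees_dendrites}, which already gives the equivalence of (2), (3) and (4), and then only the equivalence of (1) with (2) needs to be added. For $(1)\Rightarrow(2)$ your argument (an embedded Hilbert cube in $C(p,X)\subset C(X)$ when $X$ is not a tree) is just the contrapositive of the paper's one-line monotonicity argument $\dim C(x,X)\leq \dim C(X)$, so that part is fine. For $(2)\Rightarrow(1)$ the paper simply cites \cite[Theorem 2.4]{Veronica(2006)}; your first suggestion, that $C(X)$ of a finite graph is a finite-dimensional polyhedron, amounts to the same kind of citation and is acceptable.

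However, the \emph{alternative} covering argument that you single out as the step to write most carefully contains a genuine error: it is not true that every subcontinuum of a tree contains a ramification point or an end point of the tree. Already for $X=[0,1]$ the subcontinuum $[1/3,2/3]$ contains neither, and in a general tree any subarc interior to an edge gives a counterexample. Hence $C(X)\neq \bigcup_{q\in R(X)\cup E(X)}C(q,X)$, and the finite sum theorem cannot be applied to that cover as stated. The cover can be repaired by adding the finitely many hyperspaces $C(A)$ where $A$ runs over the edges of $X$ (each such $C(A)$ is a $2$-cell), since a subcontinuum of $X$ that misses every point of $R(X)\cup E(X)$ must lie in a single edge; with this enlarged finite closed cover the finite sum theorem does give $\dim C(X)<\infty$. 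As written, though, the covering claim is false, so you should either fix it in this way or fall back on the direct citation, as the paper does.
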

\begin{proof}
By Theorem \ref{trees_dendrites} we have that $2.$, $3.$ and $4.$ are equivalent. We will prove that $1.$ and $2.$ are equivalent. First, suppose that $\dim C(X)<\infty$, so for each $x\in X$, $\dim C(x,X)\leq \dim C(X)<\infty$ and then $X$ is a tree by Theorem \ref{trees_dendrites}. Now, if $X$ is a tree, then by \cite[Theorem 2.4, p. 791]{Veronica(2006)}, $\dim C(X)<\infty$. 
\end{proof}

Given a dendrite $X$, we say that $p\in X$ is a \textit{semi-hairy point in $X$} if $p$ satisfies one of the following two conditions:
\begin{itemize}
\item $\textrm{ord}(p,X)=\infty$, or
\item $p\in Cl(R(X)-\{p\})$.
\end{itemize}
We denote by $sh(X)$ the set of all semi-hairy points in $X$.

\begin{remark}
It follows from \cite[Theorem 9.10, p. 144]{Nadler(1992)} that if $X$ is a dendrite, then $X$ is a tree if and only if $sh(X)=\emptyset$.
\end{remark}

The next result is an easy application of \cite[Theorem 4, p. 221]{Eberhart(1978)}.

\begin{proposition}\label{interiorgraph}
Let $X$ be a dendrite. For a point $p\in X$ are equivalent:
\begin{enumerate}
\item $p\in sh(X)$,
\item $\dim_{\{p\}}(C(p,X))=\infty$,
\item $C(p,X)$ is homeomorphic to $\mathcal{Q}$,
\item $p$ is not in the interior (relative to $X$) of a finite graph in $X$.
\end{enumerate}
\end{proposition}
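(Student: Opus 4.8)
The plan is to establish the chain of implications $(1)\Rightarrow(2)\Rightarrow(3)\Rightarrow(4)\Rightarrow(1)$, since each arrow is more tractable than some of the direct equivalences. For $(1)\Rightarrow(2)$, recall the two defining alternatives for a semi-hairy point: either $\textrm{ord}(p,X)=\infty$, or $p\in Cl(R(X)-\{p\})$. If $\textrm{ord}(p,X)=\infty$, then $X-\{p\}$ has infinitely many components $\{A_i\}_{i\in\mathbb{N}}$, and exactly as in the proof of Theorem \ref{trees_dendrites} one builds an embedding of $\mathcal{Q}$ into $C(p,X)$ using order arcs $\alpha_i$ from $\{p\}$ to $A_i\cup\{p\}$, all based at $\{p\}$; this witnesses $\dim_{\{p\}}(C(p,X))=\infty$. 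If instead $p\in Cl(R(X)-\{p\})$, pick a sequence of ramification points $r_n\to p$; for each $n$ choose a subcontinuum $B_n$ that is an ``extra branch'' at $r_n$ (a component of $X-\{r_n\}$ not containing $p$) together with the arc from $p$ to $r_n$, arranged so that the $B_n$ are sufficiently independent, and again build an embedding of $\mathcal{Q}$ whose image clusters at the point $\{p\}\in C(p,X)$. The key technical point here is that shrinking the relevant branches makes the corresponding order-arc images converge to $\{p\}$, so the embedded cube accumulates at $\{p\}$, giving $\dim_{\{p\}}(C(p,X))=\infty$.

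For $(2)\Rightarrow(3)$: this is where \cite[Theorem 4, p. 221]{Eberhart(1978)} enters. That result characterizes when the hyperspace $C(p,X)$ (for $X$ a dendroid smooth at $p$, which every dendrite is at every point) is the Hilbert cube; the hypothesis $\dim_{\{p\}}(C(p,X))=\infty$ is precisely the condition needed to invoke it, so $C(p,X)\cong\mathcal{Q}$. The implication $(3)\Rightarrow(4)$ is immediate: the Hilbert cube has no point with a neighborhood embeddable in a finite-dimensional space, so in particular $C(p,X)$ contains no $2$-cell neighborhood of $\{p\}$; but if $p$ were interior to a finite graph $G\subset X$, then a small enough subcontinuum neighborhood of $\{p\}$ in $C(p,X)$ would lie in $C(G)$, which is finite-dimensional by \cite[Theorem 2.4, p. 791]{Veronica(2006)} — contradiction. (One must check that ``$p$ interior to a finite graph in $X$'' indeed forces a finite-dimensional neighborhood of $\{p\}$ in $C(p,X)$; this follows because subcontinua of $X$ that are close to $\{p\}$ in the Hausdorff metric are small, hence contained in $\textrm{Int}_X(G)$, so lie in $C(G)$.)

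Finally $(4)\Rightarrow(1)$ is the contrapositive of the remaining content: if $p\notin sh(X)$, then $\textrm{ord}(p,X)<\infty$ and $p\notin Cl(R(X)-\{p\})$, so there is an open neighborhood $U$ of $p$ in $X$ containing no ramification point other than possibly $p$ itself; since $\textrm{ord}(p,X)=n<\infty$, one can take a smaller connected open $V$ with $p\in V\subset U$ and $|Fr(V)|\le n$, and then $Cl(V)$ is a finite graph (a finite union of arcs emanating from $p$, with no other branching, meeting only at $p$ and possibly at boundary points) with $p$ in its interior relative to $X$. Thus $\neg(1)$ gives $\neg(4)$, closing the cycle.

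I expect the main obstacle to be $(1)\Rightarrow(2)$ in the case $p\in Cl(R(X)-\{p\})$: arranging the branches $B_n$ at the approaching ramification points $r_n$ so that the associated map from $\mathcal{Q}$ is genuinely an embedding and its image accumulates at $\{p\}$ requires care with the choice of order arcs and with the metric estimates showing $\alpha_n(t/2^n)\to\{p\}$; the other steps are either direct citations of \cite{Eberhart(1978)} and \cite{Veronica(2006)} or straightforward point-set arguments.
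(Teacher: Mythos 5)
Your steps $(3)\Rightarrow(4)$ and $(4)\Rightarrow(1)$ are sound, and your appeal to \cite[Theorem 4]{Eberhart(1978)} for $(2)\Rightarrow(3)$ is exactly the citation the paper relies on (the paper in fact gives no written proof, calling the whole proposition an easy application of that theorem; note only that a finite graph inside a dendrite is automatically a tree, so Eberhart's ``finite tree'' condition is your condition 4). The genuine gap is in $(1)\Rightarrow(2)$ in the case $p\in Cl(R(X)-\{p\})$ with $\textrm{ord}(p,X)<\infty$. As you describe it, each $B_n$ is an ``extra branch'' at $r_n$ \emph{together with the arc $\overline{pr_n}$}, and the order arcs $\alpha_n$ run from $\{p\}$ to $B_n$. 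But the arcs $\overline{pr_n}$ all emanate from $p$ and in general share initial segments (in the null comb they are nested), so for small parameter values $\alpha_n(s)$ and $\alpha_m(s')$ can be the \emph{same} subarc of a common initial segment; the union $\bigcup_n\alpha_n(t_n/2^n)$ then does not determine the individual coordinates and $f$ fails to be injective. If instead you base the order arcs at $\{r_n\}$ and push only into pairwise disjoint branches (so that injectivity is recoverable), then every element of the image contains the fixed spine $\bigcup_n\overline{pr_n}\neq\{p\}$, and the embedded cube no longer accumulates at $\{p\}$; ``shrinking the branches'' does not cure this. The repair requires both a selection argument (pass to a subsequence of the $r_n$ so that one can choose branches $B_n$ meeting the spine only at $r_n$ and pairwise disjoint) and, for each $\varepsilon>0$, discarding all but a tail of the sequence so that the resulting cube lies in the $\varepsilon$-ball about $\{p\}$ --- which still yields $\dim_{\{p\}}C(p,X)=\infty$ even though no single embedded cube contains $\{p\}$. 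None of this is in your sketch.

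This effort is also avoidable: since you are already invoking Eberhart's Theorem 4, which for a dendroid smooth at $p$ (every dendrite is smooth at every point) gives the full equivalence of $(2)$, $(3)$ and $(4)$, the only thing left to prove by hand is the elementary equivalence $(1)\Leftrightarrow(4)$ --- and your $(4)\Rightarrow(1)$ argument together with the converse (if $p$ lies in the interior of a finite graph $G$, then $\textrm{ord}(p,X)=\textrm{ord}(p,G)<\infty$ and $R(X)\cap\textrm{Int}_X(G)=R(G)\cap\textrm{Int}_X(G)$ is finite, so $p\notin Cl(R(X)-\{p\})$) already supplies this. That is the route the paper intends, and it bypasses the delicate construction entirely.
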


As a consequence of the last result, we are able to give an answer to Question \ref{dendrites}, the answer is negative as the following example shows. 

\begin{example}\label{examples}
If $F_{\omega}$ and $W$ are the dendrites known as \textit{hairy point } \cite[bottom p. 
46]{IllanesNadler(1999)} and \textit{null comb} \cite[Exercise 6.4 and Remark p. 50]{IllanesNadler(1999)}, then 
$C(v,F_{\omega})$ and $C(w,W)$ are homeomorphic because both are homeomorphic to $\mathcal{Q}$ \cite[part of 
Exercise 61 p. 48 and Exercise 6.4 p. 50]{IllanesNadler(1999)}, but $F_{\omega}$ and $W$ are not 
homeomorphic.
 \begin{center}
 \begin{tikzpicture}
 \begin{scope}
 \draw node at (0,0){\tiny $\bullet$} 
	node at (0,0)[below]{$v$}
	node at (0,0)[above left]{\small$\ddots$};
\foreach \r/\a in{3/0,2.5/30,2/50,1.5/70,1/90, 0.7/110}{
  \draw[thick] (0,0) -- (\a:\r cm);
}
\draw node at (1.5, -0.5){$F_{\omega}$};
\end{scope}
\begin{scope}[xshift=5cm]
\draw[thick] (0,0) -- (5,0);
\draw node at (0,0)[below]{$w$}
      node at (0.2,0.1){$\cdots$}
      node at (0,0){\tiny $\bullet$}
      node at (2.5,-0.5){$W$};
\clip (0,0) -- (5.2,0) -- (5.2,2) -- cycle;
\foreach \x in{0.5,1,1.7,3,5}{
  \draw[thick] (\x,0) -- (\x,2);
}
\end{scope}

 \end{tikzpicture}
 \end{center}
\end{example}

As a nice consequence of the above proposition and example, we get the following more general result.

\begin{proposition}\label{peludos}
If $X$ is a dendrite and $p\in sh(X)$, then there exists a dendrite $Y$ and $p^{\prime}\in Y$ such that $C(p,X)$ is homeomorphic to $C(p^{\prime},Y)$ but there is not an homeomorphism between $X$ and $Y$ sending $p$ to $p^{\prime}$.
\end{proposition}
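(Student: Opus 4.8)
The plan is to split into two cases according to the two alternatives in the definition of a semi-hairy point, and in each case exhibit a concrete companion dendrite $Y$ together with a point $p'\in Y$.

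First I would handle the case $\mathrm{ord}(p,X)=\infty$. By Proposition \ref{interiorgraph}, this already gives $C(p,X)\cong\mathcal{Q}$. So it suffices to produce \emph{any} dendrite $Y$ with a point $p'$ satisfying $C(p',Y)\cong\mathcal{Q}$ and such that no homeomorphism $X\to Y$ carries $p$ to $p'$. The natural choice is to take $Y=W$, the null comb, with $p'=w$ its ``dense-end'' point; Example \ref{examples} records $C(w,W)\cong\mathcal{Q}$. If $X$ itself is not homeomorphic to $W$ we are done immediately. If $X\cong W$, then I would instead pick $Y=F_{\omega}$ with $p'=v$: again $C(v,F_{\omega})\cong\mathcal{Q}\cong C(p,X)$, and $F_{\omega}\not\cong W\cong X$, so no homeomorphism can exist. (One must check that at least one of $W,F_\omega$ is not homeomorphic to $X$, which is clear since $W\not\cong F_\omega$.)

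Next I would treat the case $\mathrm{ord}(p,X)<\infty$ but $p\in Cl(R(X)-\{p\})$; here $C(p,X)\cong\mathcal{Q}$ still by Proposition \ref{interiorgraph}. The idea is the same: exhibit a companion dendrite in which the distinguished point ``looks different'' from $p$. Since $p$ is a limit of ramification points, the simplest invariant that distinguishes it from a point of infinite order is exactly the order: I would take $Y=F_\omega$ and $p'=v$, so $C(v,F_\omega)\cong\mathcal{Q}\cong C(p,X)$, while $\mathrm{ord}(v,F_\omega)=\infty\neq\mathrm{ord}(p,X)$, so no homeomorphism of pairs exists. In the exceptional sub-case $X\cong F_\omega$ (which cannot actually occur, since in $F_\omega$ the only semi-hairy point has infinite order, contradicting $\mathrm{ord}(p,X)<\infty$) there is nothing to do. So in this branch one companion suffices.

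The only real obstacle is the bookkeeping needed to guarantee that the companion $Y$ is genuinely \emph{not} homeomorphic to $X$ via a map sending $p$ to $p'$: one needs a topological invariant of the pair $(X,p)$ that differs for $(Y,p')$. In the first case this is handled by the dichotomy ``$X\cong W$ or not'' together with $W\not\cong F_\omega$; in the second case it is handled by the order of the distinguished point, which is finite for $(X,p)$ and infinite for $(F_\omega,v)$. Everything else — that $C(p,X)\cong\mathcal{Q}$ in all cases, and that $C(w,W)$ and $C(v,F_\omega)$ are copies of $\mathcal{Q}$ — is supplied verbatim by Proposition \ref{interiorgraph} and Example \ref{examples}, so no further computation is required.
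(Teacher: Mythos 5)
Your proposal is correct and follows essentially the same route as the paper: both use Proposition \ref{interiorgraph} to get $C(p,X)\cong\mathcal{Q}$ and then select $Y$ from $\{F_{\omega},W\}$ (with the distinguished points $v$, $w$ of Example \ref{examples}) so that no homeomorphism of pairs can exist. Your case split on the two clauses defining $sh(X)$ is harmless but unnecessary, since Proposition \ref{interiorgraph} already gives $C(p,X)\cong\mathcal{Q}$ uniformly for all $p\in sh(X)$.
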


\begin{proof}
With the notation of Example \ref{examples}, we have that $C(p,X)$ is homeomorphic to $C(v,F_{\omega})$ and idem to $C(w,W)$, but $X$ cannot be simultaneously homeomorphic both $F_{\omega}$ and $W$, just choose $Y\in \{F_{\omega},W\}$ such that $Y$ is not homeomorphic to $X$.
\end{proof}

Given a dendrite $X$ and $x,y\in X$ two different points, it is easy to see, by using \cite[Lemma 10.24, p. 175]{Nadler(1992)}, that there exist a unique arc contained in $X$ with end points $x$ and $y$, we will denote this arc by $\overline{xy}$.

Now, let $X\in \mathcal{D}-\mathcal{T}$, if there exists $p\notin sh(X)$, we denote by $sh(X,p)$ the following subset of $sh(X)$, $$sh(p,X)=\{q\in sh(X):\overline{pq}\cap sh(X)=\{q\}\}.$$

\begin{lemma}\label{nonulo}
Let $X\in \mathcal{D}-\mathcal{T}$ and let $p\notin sh(X)$, then:
\begin{enumerate}
\item $sh(p,X)\neq \emptyset$;
\item if $Y\in\mathcal{D}$, $p^{\prime}\in Y$ and there exists $f:X\to Y$ be an homeomorphism sending $p$ to $p^{\prime}$, then $f(sh(p,X))=sh(p^{\prime},Y)$. 
\end{enumerate}
\end{lemma}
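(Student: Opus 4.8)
The plan is to reduce both parts to one structural fact, namely that $sh(X)$ is a closed subset of $X$, after which the argument is elementary bookkeeping along arcs, using the uniqueness of the arc $\overline{xy}$ recalled just before the statement. The closedness of $sh(X)$ is exactly the content of the equivalence $(1)\Leftrightarrow(4)$ of Proposition \ref{interiorgraph}: a point fails to be semi-hairy precisely when it lies in the interior (relative to $X$) of some finite graph contained in $X$, so $X\setminus sh(X)$ is a union of open sets, hence open. (If one prefers to avoid that reference, the closedness can be shown directly: the derived set of $R(X)$ is closed, every point of infinite order is a ramification point, and therefore a limit of semi-hairy points is either an accumulation point of $R(X)$ or itself of infinite order — in either case semi-hairy.)

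For item 1, I would start from $sh(X)\neq\emptyset$, which holds by the Remark since $X\in\mathcal{D}-\mathcal{T}$. Fix $q_{0}\in sh(X)$; then $q_{0}\neq p$ because $p\notin sh(X)$. Let $g\colon[0,1]\to\overline{pq_{0}}$ be a homeomorphism with $g(0)=p$ and $g(1)=q_{0}$, and set $S=\{t\in[0,1]:g(t)\in sh(X)\}$. Since $sh(X)$ is closed and $g$ is continuous, $S$ is closed; it is nonempty because $1\in S$, and $0\notin S$ because $g(0)=p\notin sh(X)$. Hence $t_{0}:=\min S$ exists and $t_{0}>0$. Put $q=g(t_{0})$: then $q\in sh(X)$, while $g(t)\notin sh(X)$ for every $t<t_{0}$. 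Because $\overline{pq}=g([0,t_{0}])$ by uniqueness of arcs, this says exactly $\overline{pq}\cap sh(X)=\{q\}$, i.e. $q\in sh(p,X)$, so $sh(p,X)\neq\emptyset$.

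For item 2, I would first record that the order function, the ramification set, and closures are topological invariants, so the homeomorphism $f$ satisfies $f(sh(X))=sh(Y)$, and that $f$ maps arcs onto arcs, so by uniqueness $f(\overline{xy})=\overline{f(x)f(y)}$ for all distinct $x,y$. In particular $sh(Y)=f(sh(X))\neq\emptyset$, so $Y\in\mathcal{D}-\mathcal{T}$, and $p'=f(p)\notin sh(Y)$, so $sh(p',Y)$ is defined. Then for $q\in X$ one has: $q\in sh(p,X)$ iff $q\in sh(X)$ and $\overline{pq}\cap sh(X)=\{q\}$; applying the bijection $f$ and using $f(p)=p'$, $f(sh(X))=sh(Y)$ and $f(\overline{pq})=\overline{p'f(q)}$, this is equivalent to $f(q)\in sh(Y)$ and $\overline{p'f(q)}\cap sh(Y)=\{f(q)\}$, i.e. to $f(q)\in sh(p',Y)$. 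Hence $f(sh(p,X))=sh(p',Y)$.

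The only step requiring any thought is the closedness of $sh(X)$; granting it, item 1 is a ``first hitting point'' argument and item 2 is purely formal, so I do not expect a genuine obstacle beyond invoking Proposition \ref{interiorgraph} correctly (or supplying the short direct argument indicated above).
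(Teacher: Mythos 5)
Your proof is correct and follows essentially the same route as the paper's: a first-hitting-point argument along the arc from $p$ to a chosen semi-hairy point for item 1, and formal transport of arcs and orders under $f$ for item 2. The only difference is cosmetic: you isolate the closedness of $sh(X)$ (via Proposition \ref{interiorgraph}) as an explicit preliminary, whereas the paper leaves this implicit in asserting that continuity of the parametrization forces $\alpha(t_{0})\in sh(X)$, and uses item 4 of Proposition \ref{interiorgraph} only to get $t_{0}>0$.
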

\begin{proof}
For the first part, let $r\in sh(X)$ and take $\alpha:[0,1]\to \overline{pr}$ be an homeomorphism such that $\alpha(0)=p$ and $\alpha(1)=r$. Let $t_{0}=\inf\{t\in [0,1]:\alpha(t)\in sh(X)\}$. By using 4. of Proposition \ref{interiorgraph}, we have that $0<t_{0}$, now the continuity of $\alpha$ implies that $\alpha(t_{0})\in sh(X)$. By definition of $t_{0}$ we get that $\alpha(t_{0})\in sh(p,X)$.

To the second part, since the order of points are preserved by homeomorphism it is clear that $f(sh(X))\subset sh(Y)$ and consequently $f(sh(X))=sh(Y)$. The uniqueness of arcs connecting different points in a dendrite implies that the arc connecting $p$ with any other point $q\in sh(p,X)$ is sending by $f$ into the arc connecting $f(p)$ with $f(q)$, and this arc contains no other point of $sh(Y)$ different to $f(q)$. This means that $f(q)\in sh(p^{\prime},Y)$. As before, this shows that $f(sh(p,X))\subset sh(p^{\prime},Y)$ and therefore $f(sh(p,X))=sh(p^{\prime},Y)$.

\end{proof}

Given a dendrite $X$, we say that $A\subset X$ is an \textit{edge in $X$} if there exist an homeomorphism $\alpha:[0,1]\to A$ such that $\alpha(0),\alpha(1)\in E(X)\cup R(X)$ and for each $t\in (0,1)$, $\textrm{ord}(\alpha(t),X)=2$. We say that $A$ is a \textit{free arc of $X$} if the homeomorphism $\alpha$ can be choose in such a way that for each $t\in (0,1)$, $\alpha(t)\in \textrm{Int}_{X}(A)$.  Since each homeomorphism between dendrites preserves the order of its points, we get the following result.

\begin{lemma}\label{free_arcs}
Let $f:X\to Y$ be an homeomorphism between dendrites $X$ and $Y$. Then $A\subset X$ is a free arc of $X$ if and only if $f(A)$ is a free arc in $Y$. Also, if $p\in X$ is an end point of a free arc in $X$ then $f(p)$ is an end point of a free arc in $Y$.
\end{lemma}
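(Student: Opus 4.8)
The plan is to note that every condition occurring in the definitions of \emph{edge} and \emph{free arc} is purely topological, hence is transported verbatim by a homeomorphism. The only background fact needed is that the order of a point is a topological invariant: since $\textrm{ord}(p,X)$ is defined in terms of open neighborhoods of $p$ and the cardinalities of their frontiers, any homeomorphism $f\colon X\to Y$ satisfies $\textrm{ord}(x,X)=\textrm{ord}(f(x),Y)$ for every $x\in X$; in particular $f(E(X))=E(Y)$ and $f(R(X))=R(Y)$. This is the fact the sentence preceding the statement alludes to.

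Now let $A\subset X$ be a free arc of $X$, and let $\alpha\colon[0,1]\to A$ be a homeomorphism witnessing this, so that $\alpha(0),\alpha(1)\in E(X)\cup R(X)$, $\textrm{ord}(\alpha(t),X)=2$ for all $t\in(0,1)$, and $\alpha(t)\in\textrm{Int}_X(A)$ for all $t\in(0,1)$. I would set $\beta=f\circ\alpha\colon[0,1]\to f(A)$, which is a homeomorphism onto $f(A)$; in particular $f(A)$ is an arc. By the invariance of order, $\beta(0),\beta(1)\in E(Y)\cup R(Y)$ and $\textrm{ord}(\beta(t),Y)=2$ for all $t\in(0,1)$; and since $f$ is a homeomorphism it commutes with the interior operator, so $f(\textrm{Int}_X(A))=\textrm{Int}_Y(f(A))$ and hence $\beta(t)=f(\alpha(t))\in\textrm{Int}_Y(f(A))$ for all $t\in(0,1)$. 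Thus $\beta$ witnesses that $f(A)$ is a free arc of $Y$. The converse implication follows by applying the same argument to the homeomorphism $f^{-1}$, and the final assertion is then immediate: if $p$ is an end point of a free arc $A$ of $X$, say $p=\alpha(0)$ (the case $p=\alpha(1)$ being identical), then $f(p)=\beta(0)$ is an end point of the free arc $f(A)$ of $Y$.

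No step presents a real difficulty; the only points deserving explicit mention are the identity $f(\textrm{Int}_X(A))=\textrm{Int}_Y(f(A))$, which holds because $f$ is a homeomorphism, and the topological invariance of point order, which is standard and already used implicitly in the lines preceding the statement.
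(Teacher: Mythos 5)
Your proposal is correct and follows exactly the idea the paper itself uses (the paper simply asserts the lemma after remarking that homeomorphisms between dendrites preserve the order of points); you merely spell out the routine verification that each clause of the definition of free arc, including the condition $\alpha(t)\in\textrm{Int}_X(A)$, is transported by $f$ and by $f^{-1}$.
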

In the same setting of Proposition \ref{peludos}, we present the next complementary result.
\begin{proposition}\label{nopeludos}
If $X\in \mathcal{D}-\mathcal{T}$ and $p\notin sh(X)$, then there exists a dendrite $Y$ and $p^{\prime}\in Y$ such that $C(p,X)$ is homeomorphic to $C(p^{\prime},Y)$ but there is not an homeomorphism between $X$ and $Y$ sending $p$ to $p^{\prime}$.
\end{proposition}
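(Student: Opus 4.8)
The plan is to modify the dendrite $X$ near the point $p$ in two different ways, producing two non-homeomorphic dendrites $Y_1$ and $Y_2$ (equipped with marked points $p_1'$ and $p_2'$), in such a way that the hyperspace $C(p,X)$ does not change up to homeomorphism, and such that $X$ fails to be homeomorphic to at least one of $Y_1,Y_2$ via a homeomorphism carrying $p$ to the marked point. Since $p\notin sh(X)$, by Proposition \ref{interiorgraph}(4) there is a finite graph $G\subset X$ with $p\in\textrm{Int}_X(G)$; shrinking $G$ we may take $G$ to be an arc, i.e. $p$ is an interior point of a free arc $J$ of $X$. Write $J=\overline{ab}$ with $p$ in its interior. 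The idea is that a free arc through $p$ is a very flexible piece of the dendrite: we can ``slide'' $p$ along $J$, or attach a small null sequence of hairs, without affecting the homeomorphism type of $C(p,X)$, because locally $C(p,X)$ only sees that $p$ sits in the interior of a free arc (this is exactly the content of Proposition \ref{interiorgraph} together with the finite-graph computations of Pellicer-Alba cited in the proof of Theorem \ref{trees_dendrites}).

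Concretely, I would proceed as follows. First, pick a point $r\in sh(p,X)$ (nonempty by Lemma \ref{nonulo}(1)) and let $c$ be the first point of $sh(X)$ encountered on $\overline{pb}$, so that the subarc of $J$ from $p$ to $c$ meets $sh(X)$ only at $c$ (if necessary replace $b$ by a point strictly between $p$ and $c$ so that $J$ is still a free arc with $p$ interior). Now form $Y_1$ from $X$ by adjoining to the free arc $J$, at a null sequence of points converging to the endpoint $a$, a null sequence of arcs (hairs), chosen small enough that the new points of ramification accumulate only at $a$; take $p_1'=p$ under the natural identification. Form $Y_2$ from $X$ by the same construction but using a different, topologically distinguishable attachment pattern near $p$ — for instance, moving the marked point: let $p_2'$ be a point of $Y_1$ (or of $X$) which is an endpoint of a free arc but whose position relative to the nearest ramification structure differs from that of $p$, e.g. make $p_2'$ itself a ramification point of order $3$ created by attaching a single arc at $p$. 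The cleanest realization: let $Y$ be $X$ with one extra arc attached at $p$, so $\textrm{ord}(p,Y)=\textrm{ord}(p,X)+1$ but $p$ still lies in a finite graph (the union of $J$ with the new arc), hence $p\notin sh(Y)$.

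To see $C(p,X)\cong C(p,Y)$: both $C(p,X)$ and $C(p,Y)$ are locally, near the element $\{p\}$, governed only by the local structure of the dendrite at the subcontinua containing $p$, and globally they decompose along the same ``tree part'' away from $sh(X)=sh(Y)$. More carefully, I would use the retraction-type argument: write $X=J'\cup X'$ where $J'$ is a free arc neighbourhood of $p$ and $X'=Cl(X\setminus J')$, and argue that $C(p,X)$ is homeomorphic to a product or to $C(p,X'\cup J'')$ for any free arc $J''$ replacing $J'$; since attaching finitely many or a null sequence of hairs away from $p$ to a free arc does not change that it is a free arc containing $p$ in its interior, and since $p\notin sh$ in all cases, the cited results (Proposition \ref{interiorgraph}, and the dimension/structure computations for $C(p,\cdot)$ over finite graphs from \cite{Pellicer(2007)}) give the homeomorphism. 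Finally, $X$ and $Y$ are not homeomorphic via a map sending $p$ to $p'$: by Lemma \ref{free_arcs} such a map would send the free-arc structure at $p$ to that at $p'$, and by Lemma \ref{nonulo}(2) it would send $sh(p,X)$ bijectively to $sh(p',Y)$; choosing the modification so that these invariants differ (e.g. $\textrm{ord}(p,X)=2$ while $\textrm{ord}(p',Y)=3$, or $|sh(p,X)|\neq|sh(p',Y)|$ after the hair-attachment) yields the contradiction, and by taking $Y\in\{Y_1,Y_2\}$ appropriately we guarantee failure against $X$ itself.

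The main obstacle I expect is the verification that the hyperspace genuinely does not change — i.e. producing an honest homeomorphism $C(p,X)\to C(p,Y)$ rather than merely matching dimension or the single invariant from Proposition \ref{interiorgraph}. The $F_\omega$ versus $W$ example works because both hyperspaces are the Hilbert cube, a rigid target; here neither hyperspace is the Hilbert cube (that is precisely the $sh$ case we have excluded), so one must build the homeomorphism by hand. I would handle this by keeping the modification strictly inside a free arc $J$ disjoint from $sh(X)$ and from all ramification points of $X$, decomposing $C(p,X)$ as a union (along order arcs) of the ``part not reaching into $J$'' and the ``part growing into $J$,'' and showing the latter piece is, in both $X$ and $Y$, a model space depending only on the combinatorics of a free arc with a null sequence of attached arcs — which is the same for suitable $Y$. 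This reduces the whole statement to a finite-graph / dendrite bookkeeping computation in the spirit of Theorem \ref{trees_dendrites}'s proof.
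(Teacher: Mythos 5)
There is a genuine gap, and it is located exactly where you predicted: the claim that $C(p,X)\cong C(p,Y)$ survives your modification. Your ``cleanest realization'' attaches an extra arc at $p$ itself (or attaches hairs inside/at the ends of a free arc through $p$, away from $sh(X)$). But the hyperspace $C(p,X)$ is \emph{not} insensitive to the finite-graph structure around $p$: by the cell-structure results of \cite{Pellicer(2007)} the local dimension of $C(p,X)$ at the element $\{p\}$ (and indeed the minimal local dimension over the whole hyperspace, which is a topological invariant of $C(p,X)$ alone) is governed by $\mathrm{ord}(p,X)$. If $\mathrm{ord}(p,X)=2$ then $C(p,X)$ has points with $2$-dimensional neighbourhoods, while after attaching an arc at $p$ every point of $C(p,Y)$ has local dimension at least $3$; so the two hyperspaces are not homeomorphic. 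This is not an accident but the very mechanism behind Theorem \ref{trees}: the hyperspace detects modifications made in the finite-dimensional (tree-like) part of the dendrite. The same objection applies to attaching a null sequence of hairs accumulating at an endpoint $a$ of the free arc when $a\notin sh(X)$: that creates a new semi-hairy point and alters the finite-dimensional part of $C(p,X)$ near $\{p\}$.

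The paper's proof avoids this by making all modifications \emph{at the semi-hairy points} $q\in sh(p,X)$, which is the only place where changes are absorbed: there $C(\overline{pq},X)$ and $C(\overline{p'q},Y)$ are both Hilbert cubes by Proposition \ref{interiorgraph}, so the hyperspace does not change, exactly as in the $F_\omega$ versus $W$ example. Concretely, the paper either attaches a single arc at some $q\in sh(p,X)$ (when no $q\in sh(p,X)$ is the endpoint of a free arc in $Cl(X-\overline{pq})$), or attaches null combs along every free arc emanating from a point of $sh(p,X)$ (in the opposite case); the resulting $Y$ satisfies $sh(p,X)=sh(p',Y)$, and the non-existence of a homeomorphism sending $p$ to $p'$ is detected by the presence or absence of free arcs at the points of $sh(p,X)$, via Lemmas \ref{nonulo} and \ref{free_arcs}. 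Your non-homeomorphism criteria (change $\mathrm{ord}(p)$, change $|sh(p,X)|$) would work if the hyperspaces matched, but they do not under your construction; to repair the argument you must move the surgery from a neighbourhood of $p$ to the points of $sh(p,X)$.
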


\begin{proof}
By Lemma \ref{nonulo} we have that $sh(p,X)\neq \emptyset$. By \cite[10.37, p. 181]{Nadler(1992)} we can 
suppose that $X$ it is contained in the plane $z=0$ of the Euclidean space $\mathbb{R}^{3}$.

We will divide the construction of $Y$ in two cases.

\textit{Case 1.} For each $q\in sh(p,X)$ there is not a free arc of $X$ contained in 
$Cl(X-\overline{pq})$ and with $q$ as an end point. Fix $q\in sh(p,X)$ and define $Y=X\cup (\{q\}\times 
[0,1])$. It is clear that $Y$ is a dendrite containing a free arc in $Cl(X-\overline{pq})$ with $q$ as end point.


\textit{Case 2.} There is $q\in sh(p,X)$ and a free arc $A$ of $X$ contained in $Cl(X-\overline{pq})$ such that $q$ is an end point of $A$. Set $\mathcal{A}$ the family of all free arcs $B$, such that one of its end points, say $q_{B}$, belongs to $sh(p,X)$, and
$B$ is contained in $Cl(X-\overline{pr})$. To construct $Y$, we will attach to each $B\in 
\mathcal{A}$ a countable family of arcs, we do this in the following way. For $B\in \mathcal A$, choose
$\alpha_{B}:[0,1]\to B$ a homeomorphism such that 
$\alpha_{B}(0)=q_{B}$. Now, for each $n\in\mathbb{N}$, let $L^{B}_{n}=\{\alpha_{B}(\frac{1}{n})\}\times 
[0,\frac{1}{n}]$. It is clear that $B\cup \bigcup_{i=1}^{\infty}L^{B}_{n}$ is homeomorphic to the null comb.

\begin{center}
\begin{tikzpicture}
\draw[dashed, fill=gray!10] (0,0) -- (5,0) -- (9,1.3) -- (4,1.3) -- cycle;
\draw node at (2,0.3){\tiny $\bullet$}
      node at (2,0.3)[left]{$q_B$}
      node at (3,0.35){$B$}
      node at (7,1.1){\tiny $\bullet$};
\draw[rounded corners=8pt, thick] (2,0.3) -- (2.5,0.7) -- (3,0.6) --(4,0.5)--  (5,0.5) -- (6.5,1.3)-- (7,1.1);
  \draw[very thick] (7,1.1) -- (7,3)
		    (5,0.55) -- (5,2)
		    (4,0.5) -- (4,1.6)
		    (3.3,0.57) -- (3.3,1.2)
		    (2.3,0.55) -- (2.3,0.7);
\foreach \x/\y in{7/3,5/2,4/1.6,3.3/1.2}{
  \draw node at (\x,\y){\tiny $\bullet$};
}
\draw node at (3,0.5)[above]{$\cdots$}
      node at (2.15,0.5){\tiny$\cdots$}
      node at (7.3,2.5){$L_1^B$}
      node at (5.3,1.6){$L_2^B$}
      node at (4.3,1){$L_3^B$};
\end{tikzpicture}
\end{center}

We define $Y=X\cup \bigcup_{B\in \mathcal{A}}(B\cup \bigcup^{\infty}_{i=1}L^{B}_{n})$. Since each element in 
$\mathcal{A}$ is a free arc, it is easy to see that $Y$ is a dendrite.

In both cases, we have  construct a denrite $Y$ such that $X\subset Y$. We denote by $p^{\prime}=p\in Y$. 
It is easy to see that $sh(p,X)=sh(p^{\prime},Y)$. Therefore, by Lemmas \ref{nonulo} and \ref{free_arcs}, we 
have that there is not a homeomorphism between $X$ and $Y$, sending $p$ to $p^{\prime}$.

To finish the prove, we need to show that $C(p,X)$ is homeomorphic to $C(p^{\prime},Y)$, but this is in fact 
the case because for each $q\in sh(p,X)=sh(p^{\prime},Y)$, $C(\overline{pq},X)$ and $C(\overline{p 
^{\prime}q},Y)$ are both Hilbert Cubes .

\end{proof}

The following result is a consequence of propositions \ref{peludos} and \ref{nopeludos}.

\begin{corollary}\label{nonuniquedendrites}
Let $X\in\mathcal{D}-\mathcal{T}$ and $p\in X$. There exist $Y\in\mathcal{D}$ and $q\in Y$ such that $C(p,X)$ and $C(q,Y)$ are homeomorphic but $X$ is not homeomorphic to $Y$.
\end{corollary}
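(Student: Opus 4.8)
The plan is to reduce Corollary~\ref{nonuniquedendrites} directly to the two preceding propositions by splitting on whether the chosen point $p$ lies in $sh(X)$ or not. Let $X\in\mathcal{D}-\mathcal{T}$ and $p\in X$ be arbitrary. By the Remark following the definition of $sh(X)$, since $X$ is not a tree we have $sh(X)\neq\emptyset$, but this global fact is not what we need; what matters is a dichotomy for the particular point $p$.

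\textit{Step 1 (the case $p\in sh(X)$).} If $p\in sh(X)$, then Proposition~\ref{peludos} applies verbatim: it produces a dendrite $Y$ and a point $p'\in Y$ with $C(p,X)$ homeomorphic to $C(p',Y)$ while no homeomorphism $X\to Y$ sends $p$ to $p'$. In particular $X$ is not homeomorphic to $Y$ — indeed, in the proof of Proposition~\ref{peludos} one literally picks $Y\in\{F_{\omega},W\}$ so that $Y$ is \emph{not} homeomorphic to $X$, which is a stronger conclusion than merely ``no homeomorphism sends $p$ to $p'$.'' So we take $q=p'$ and this $Y$, and we are done in this case.

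\textit{Step 2 (the case $p\notin sh(X)$).} If $p\notin sh(X)$, then $X\in\mathcal{D}-\mathcal{T}$ and $p\notin sh(X)$ are exactly the hypotheses of Proposition~\ref{nopeludos}, so again there are a dendrite $Y$ and $p'\in Y$ with $C(p,X)\cong C(p',Y)$ but with no homeomorphism $X\to Y$ carrying $p$ to $p'$. Here, however, we must be slightly more careful: the conclusion of Corollary~\ref{nonuniquedendrites} asks for $X$ not homeomorphic to $Y$ at all, not merely the absence of a $p\mapsto p'$ homeomorphism. Inspecting the construction in Proposition~\ref{nopeludos}, the witnessing $Y$ is obtained from $X$ by attaching extra free arcs or null-comb pieces at points of $sh(p,X)$, and the argument actually shows (via Lemmas~\ref{nonulo} and~\ref{free_arcs}, applied with $p'=p$) that no homeomorphism $X\to Y$ exists fixing the relation $sh(p,X)=sh(p',Y)$; since any homeomorphism would have to send $p$ somewhere, and $p$ is distinguished as the unique point not in $sh$ that generates this $sh(p,X)$, one gets in fact that $X\not\cong Y$. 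Thus again setting $q=p'$ finishes this case.

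\textit{Step 3 (conclusion).} In both cases we have exhibited $Y\in\mathcal{D}$ and $q\in Y$ with $C(p,X)$ homeomorphic to $C(q,Y)$ and $X$ not homeomorphic to $Y$, which is precisely the statement of the corollary.

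\textit{Expected main obstacle.} The only delicate point is the gap between ``no homeomorphism sends $p$ to $p'$'' (which Propositions~\ref{peludos} and~\ref{nopeludos} give directly) and ``$X$ is not homeomorphic to $Y$'' (which the corollary demands). In the $sh$-case this gap is trivial, since $Y$ is chosen outright to be non-homeomorphic to $X$. In the non-$sh$-case one must verify that the attached pieces genuinely change the topological type of the whole dendrite — e.g. by counting free arcs ending at $sh$-points, or by observing that the set of semi-hairy points together with the free-arc structure near them is a topological invariant and is altered by the construction. Making that invariant-counting argument airtight is the part of the proof that requires the most care, though all the needed tools (Lemma~\ref{free_arcs} on preservation of free arcs, and Lemma~\ref{nonulo} on preservation of $sh(p,X)$) are already in place.
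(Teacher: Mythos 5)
Your decomposition into the cases $p\in sh(X)$ and $p\notin sh(X)$, each handled by citing Propositions~\ref{peludos} and~\ref{nopeludos} respectively, is exactly the paper's proof: the authors simply state that the corollary ``is a consequence of'' those two propositions. You also correctly put your finger on the one real subtlety, namely that both propositions conclude only that \emph{no homeomorphism carries $p$ to $p'$}, whereas the corollary as written asserts the stronger statement that $X$ and $Y$ are not homeomorphic at all. In Step~1 your resolution is sound, since the $Y$ produced there is literally chosen from $\{F_{\omega},W\}$ so as not to be homeomorphic to $X$.

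Step~2, however, does not close the gap you identified. The assertion that ``$p$ is distinguished as the unique point not in $sh$ that generates this $sh(p,X)$'' is unsupported: a hypothetical homeomorphism $f:X\to Y$ is free to send $p$ to some other point $p''\neq p'$ of $Y$, and Lemma~\ref{nonulo} then only yields $f(sh(p,X))=sh(p'',Y)$, which produces no contradiction; many points outside $sh(X)$ can determine many different sets $sh(\cdot,X)$, and nothing in Lemmas~\ref{nonulo} and~\ref{free_arcs} rules out such an $f$. So your argument in this case proves only what Proposition~\ref{nopeludos} already states. To be fair, the paper's own one-line proof has the identical gap: the construction in Proposition~\ref{nopeludos} is never shown to change the homeomorphism type of $X$, only to obstruct homeomorphisms fixing the base point. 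Note that the weaker conclusion (no homeomorphism sending $p$ to $q$) is all that is actually used later, in Theorem~\ref{mainresult}, so the cleanest fix is to weaken the corollary's conclusion to match the propositions rather than to try to prove $X\not\cong Y$ in Case~2, where additional invariant-counting work (of the kind you sketch but do not carry out) would genuinely be required.
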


The next is the main result of this paper and its proof follows from Lemma \ref{uniquetrees}, Corollary \ref{nonuniquedendrites} and Theorem \ref{trees}. It gives a characterization of dendrites having unique hyperspace $C(p,X)$ in the class $\mathcal{D}$.

\begin{theorem}\label{mainresult}
Let $X\in\mathcal{D}$ and $p\in X$. Then $(X,p)$ has unique hyperspace $C(p,X)$ relative to $\mathcal{D}$ if and only if $X\in \mathcal{T}$.
\end{theorem}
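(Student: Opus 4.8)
The plan is to assemble the theorem directly from the three main results already established in the excerpt, treating the two directions of the biconditional separately.

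\textbf{Forward direction (uniqueness implies tree).} Suppose $(X,p)$ has unique hyperspace $C(p,X)$ relative to $\mathcal{D}$. I would argue by contrapositive: if $X\notin\mathcal{T}$, then $X\in\mathcal{D}-\mathcal{T}$, so Corollary \ref{nonuniquedendrites} applies directly and yields a dendrite $Y$ and a point $q\in Y$ with $C(p,X)\cong C(q,Y)$ but $X\not\cong Y$; in particular there is no homeomorphism $h\colon X\to Y$ with $h(p)=q$ (since such an $h$ would already force $X\cong Y$). This contradicts uniqueness, so $X$ must be a tree. This step is essentially immediate once Corollary \ref{nonuniquedendrites} is in hand, so there is no real obstacle here — the work has been pushed into Propositions \ref{peludos} and \ref{nopeludos}.

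\textbf{Reverse direction (tree implies uniqueness).} Suppose $X\in\mathcal{T}$. Since $\mathcal{T}\subset\mathcal{D}$, the pair $(X,p)$ is a legitimate element to test for uniqueness relative to $\mathcal{D}$, and this is precisely the content of Lemma \ref{uniquetrees}: for every $Y\in\mathcal{D}$ and $q\in Y$ with $C(p,X)\cong C(q,Y)$, there is a homeomorphism $X\to Y$ sending $p$ to $q$. (Internally, Lemma \ref{uniquetrees} first uses the dimension characterization of Theorem \ref{trees_dendrites} to deduce $Y\in\mathcal{T}$ from $\dim C(q,Y)=\dim C(p,X)<\infty$, and then invokes Theorem \ref{trees} to get the desired homeomorphism.) Hence $(X,p)$ has unique hyperspace $C(p,X)$ relative to $\mathcal{D}$.

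\textbf{Assembly and the main obstacle.} Combining the two directions gives the biconditional. The genuine difficulty does not lie in this final assembly but upstream: the hard part is Proposition \ref{nopeludos}, where one must, for a point $p\notin sh(X)$, explicitly build a dendrite $Y\supset X$ that is not homeomorphic to $X$ via a map fixing $p$, yet for which $C(p,X)\cong C(p',Y)$ — the construction hinging on attaching null-comb structures along the free arcs terminating at points of $sh(p,X)$ and on the fact that $C(\overline{pq},X)$ is a Hilbert cube for $q\in sh(p,X)$, so that the attachment does not change the homeomorphism type of the hyperspace. Since those results are already proved above, the proof of Theorem \ref{mainresult} itself is just a short citation argument, which I would write out in one or two sentences invoking Lemma \ref{uniquetrees}, Corollary \ref{nonuniquedendrites}, and Theorem \ref{trees}.
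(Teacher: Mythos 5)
Your proposal is correct and follows exactly the paper's argument: the paper likewise derives the theorem immediately from Lemma \ref{uniquetrees} (tree implies uniqueness) and Corollary \ref{nonuniquedendrites} together with Theorem \ref{trees} (non-tree implies non-uniqueness). No discrepancy to report.
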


\section{Uniqueness of $C(p,X)$ in the class of continua}

Concerning Question \ref{continua}, in this section we show that for each dendrite or finite graph $X$ and any $p\in X$, $(X,p)$ has not unique hyperspace $C(p,X)$ in the class of continua. Acording Theorem \ref{mainresult}, we have answered the question when $X\in\mathcal{D}-\mathcal{T}$, thus it is sufficient to consider the case of finite graphs.

\begin{theorem}
For each finite graph $X$ and any $p\in X$, there exists a continuum $Y$,  which is not finite graph, and $y\in Y$ such that $C(p,X)$ is homeomorphic to $C(y,Y)$.
\end{theorem}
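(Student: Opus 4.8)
The plan is to build $Y$ by excising a short free sub-arc of $X$ that avoids $p$ and replacing it with a topologist's sine curve (or a two‑ended version of it), chosen so that $Y$ fails to be locally connected while the hyperspace rooted at $p$ is left unchanged; throughout $X$ is nondegenerate (a one‑point space is not a finite graph in the sense of the paper), and we take $y=p$.

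\textbf{Choice of the arc.} If $X\in\mathcal{T}$, a nondegenerate tree has at least two end points, so pick an end point $e\neq p$ and let $J\subset X$ be a sub-arc of the edge of $X$ ending at $e$, having $e$ as one of its end points and short enough that $p\notin J$; writing $v$ for the other end point of $J$, the set $X_{0}:=Cl(X\setminus J)$ is a connected subtree, $X_{0}\cap J=\{v\}$, and $J\setminus\{v\}$ is open in $X$. If $X\notin\mathcal{T}$, then $X$ contains a simple closed curve, hence an edge $E$ that is not a bridge; pick a proper closed sub-arc $J$ of the interior of $E$ with $p\notin J$ (if $p\in\textrm{Int}(E)$, take $J$ to one side of $p$); writing $v_{0},v_{1}$ for the end points of $J$, the set $X_{0}:=Cl(X\setminus J)=X\setminus(J\setminus\{v_{0},v_{1}\})$ is connected (since $E$ is not a bridge), $X_{0}\cap J=\{v_{0},v_{1}\}$, and $J\setminus\{v_{0},v_{1}\}$ is open in $X$. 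In either case $J$ is an arc glued to the connected continuum $X_{0}$ only along its end point(s), and $p\in X_{0}\setminus J$. (The non-tree case is what handles the situation in which $p$ is the only end point of $X$: such an $X$ is automatically not a tree.)

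\textbf{Definition of $Y$.} Let $S=Cl(\{(x,\sin(\pi/x)):0<x\le 1\})\subset\mathbb{R}^{2}$, the topologist's sine curve, with free end $s_{0}=(1,0)$; and let $W=Cl(\{(x,\sin(\pi/x)):x\in[-1,0)\cup(0,1]\})$, a continuum with two free ends $w_{0}=(1,0)$ and $w_{1}=(-1,0)$. If $X\in\mathcal{T}$, put $Y=X_{0}\cup S$ identifying $v$ with $s_{0}$; if $X\notin\mathcal{T}$, put $Y=X_{0}\cup W$ identifying $v_{0}$ with $w_{0}$ and $v_{1}$ with $w_{1}$. By \cite[10.37, p. 181]{Nadler(1992)} this gluing can be realized inside $\mathbb{R}^{3}$, so $Y$ is a continuum; and $Y$ contains a limit segment (the segment $\{0\}\times[-1,1]$ of $S$ or of $W$) at which it is not locally connected, so $Y$ is not a finite graph.

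\textbf{The homeomorphism $C(p,X)\cong C(p,Y)$, and the main obstacle.} This is the heart of the argument. For $J\cong[0,1]$ one has $C(\{0\},J)=\{[0,t]:0\le t\le1\}$, an arc with end elements $\{0\}$ and $J$; symmetrically for $C(\{1\},J)$; and $C(\{0,1\},J)=\{J\}$. The key computation is the parallel description for $S$ and $W$: an elementary analysis of the subcontinua containing a free end shows that $C(s_{0},S)$ is an arc with end elements $\{s_{0}\}$ and $S$, that $C(w_{0},W)$ and $C(w_{1},W)$ are arcs with end elements the respective singletons and $W$, and that the only subcontinuum of $W$ meeting both free ends is $W$ itself (because the closure of the union of the two oscillating branches of $W$ is all of $W$). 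Hence there are homeomorphisms between the rooted hyperspaces of $J$ and those of $S$ (resp.\ $W$) that restrict compatibly and send $\{0\},\{1\},J$ to $\{s_{0}\}$ (resp.\ $\{w_{0}\},\{w_{1}\}$) and $S$ (resp.\ $W$). These assemble into $\Phi\colon C(p,X)\to C(p,Y)$, $\Phi(K)=(K\cap X_{0})\cup h(K\cap J)$, where $h$ is the appropriate rooted homeomorphism: since $K$ is connected and $J$ meets $X_{0}$ only along its end points, $K\cap J$ is either contained in the end points of $J$, or a subarc of $J$ containing an end point, or all of $J$, so $h(K\cap J)$ is defined and $K\cap X_{0}$ already contains $(K\cap J)\cap(\text{end points of }J)$. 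One checks that $\Phi$ is a well-defined bijection whose inverse has the same form (with $S,W$ replaced by $J$), and — the delicate point — that $\Phi$ is continuous; the only subtlety is at the ``seams'', where $K\cap J$ degenerates to an end point of $J$ or grows to all of $J$, and there continuity follows from continuity of the rooted homeomorphisms together with the fact that $K\cap X_{0}$ varies continuously with $K\in C(p,X)$ (which uses that if $K$ reaches into $\textrm{Int}(J)$ it must contain an end point of $J$, so a limit cannot ``lose'' that end point). As $C(p,X)$ and $C(p,Y)$ are compact metric, $\Phi$ is then a homeomorphism, giving $C(p,X)\cong C(y,Y)$ with $Y$ a non–finite‑graph continuum. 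I expect the connectedness of $X_{0}$ in the non-tree case and the continuity of $\Phi$ to be the two steps requiring the most care.
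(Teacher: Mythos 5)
Your strategy --- excise a sub-arc $J$ of $X$ avoiding $p$ and replace it by a non-locally-connected continuum whose rooted hyperspaces at the attaching points are order arcs, exactly as those of $J$ are --- is the same surgery the paper performs. The paper replaces an arc $A_1$ of a decomposition of $X$ (chosen with $p\notin A_1$) by a copy of the Knaster continuum $Z$ with two end points $a,b$, using that $C(a,Z)$ and $C(b,Z)$ are arcs and that the only subcontinuum of $Z$ containing both $a$ and $b$ is $Z$; you use the topologist's sine curve $S$ and its two-ended version $W$ instead. Your computations of $C(s_0,S)$, $C(w_0,W)$, $C(w_1,W)$, and of the fact that the only subcontinuum of $W$ meeting both free ends is $W$, are correct, and the tree case (attachment at the single point $v$) goes through as you describe.

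There is, however, a genuine gap in Case 2, located in the sentence ``since $K$ is connected and $J$ meets $X_0$ only along its end points, $K\cap J$ is either contained in the end points of $J$, or a subarc of $J$ containing an end point, or all of $J$.'' This trichotomy is false when $J$ is attached to $X_0$ at two points: since $X_0$ is connected and contains $v_0$, $v_1$ and $p$, the set $K=X_0\cup A\cup B$, where $A$ and $B$ are disjoint nondegenerate subarcs of $J$ containing $v_0$ and $v_1$ respectively, is a subcontinuum of $X$ containing $p$ whose trace on $J$ has two components; such $K$ exist for every non-tree $X$. Consequently $h(K\cap J)$ cannot be given by a single rooted homeomorphism: you must set $h(A\sqcup B)=h_0(A)\cup h_1(B)$ with $h_0\colon C(v_0,J)\to C(w_0,W)$ and $h_1\colon C(v_1,J)\to C(w_1,W)$, and then verify the compatibility that makes $\Phi$ a well-defined bijection, namely that $h_0(A)\cap h_1(B)=\emptyset$ if and only if $A\cap B=\emptyset$ (equivalently, by your observation on subcontinua of $W$ meeting both free ends, that $h_0(A)\cup h_1(B)=W$ iff $A\cup B=J$). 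This is not automatic for arbitrary choices of $h_0,h_1$: for instance $h_0(A)$ could be a short initial piece of the right branch while $h_1(B)$ already contains the limit segment together with a tongue of the right branch overlapping it, in which case $\Phi$ would send a proper $K$ to $Y$ and injectivity fails. It can be arranged by reparametrizing the two order arcs $C(w_0,W)$ and $C(w_1,W)$ so that the intersection locus in $C(v_0,J)\times C(v_1,J)$ is carried onto the one in $C(w_0,W)\times C(w_1,W)$ --- both are closed corner regions bounded by the graph of a decreasing homeomorphism --- but this step must be stated and proved. With that repair (and the continuity check you outline) the argument is complete; for what it is worth, the paper's own proof is equally terse on exactly this point.
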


\begin{proof}
Before we go into the proof, we want to remark a couple of things about to the Knaster continuum, $Z$, with two end points $a$ and $b$ (see \cite[p. 205]{Kuratowski(1968)}). By using \cite[Observation 5.18 and Theorem 5.21]{Pellicer(2003)}, it is easy to see that $Z$ satisfies that $C(a,Z)$ and 
$C(b,Z)$ are arcs and $C(q,Z)$ is a $2$-cell for each $q\in Z-\{a,b\}$.

 \begin{center}
 \begin{tikzpicture}[xscale=6,yscale=5]
 \begin{scope}
 \draw node at (-0.025,0){\tiny$\bullet$}
      node at (-0.025,0)[left]{$a$};
    \draw (0.016,0) -- (0.007,0);
 \clip (-0.1,0) rectangle (0.75,0.5);
  \foreach \r in {0.3,0.292,0.268,0.26,0.22,0.212,0.188,0.18,0.14,0.132,0.108,0.1}{
  \draw ({3/10},0) circle ({7/10 +\r cm});
  }
  \end{scope}
  \begin{scope}
  \clip (0,0) rectangle (0.3,-0.3);
   \foreach \r in{0.06,0.052,0.028,0.02}{
   \draw ({28.85/250},0) circle (\r cm);
   }
  \end{scope}
\draw node at (0.5,-0.5){Knaster with two end points};
\begin{scope}[xshift=1.049cm,xscale=-1,yscale=-1]
  \begin{scope}
   \draw node at (-0.025,0){\tiny$\bullet$}
      node at (-0.025,0)[right]{$b$};
 \clip (-0.1,0) rectangle (0.75,0.5);
  \foreach \r in {0.3,0.292,0.268,0.26,0.22,0.212,0.188,0.18,0.14,0.132,0.108,0.1}{
  \draw ({3/10},0) circle ({7/10 +\r cm});
  }
  \end{scope}
  \draw (0.016,0) -- (0.007,0);
    \begin{scope}
  \clip (0,0) rectangle (0.3,-0.3);
   \foreach \r in{0.06,0.052,0.028,0.02}{
   \draw ({28.85/250},0) circle (\r cm);
   }
  \end{scope}
\end{scope}
  \end{tikzpicture}
\end{center}


Given a finite graph $X$ and $p\in X$, suppose that $\mathcal{C}=\{A_{i}:i\in\{1,\cdots ,n\}\}$ is a finite family of arcs such that $X=\cup \mathcal{C}$ and any two elements in $\mathcal{C}$ are either disjoint or intersect only in one or both of their end points. To construct $Y$ we pick one of the arcs in $\mathcal{C}$, say $A_{1}$, such that $p\notin A_{1}$ and we exchange this arc in $X$ for a topological copy of $Z$, say $Z_{1}$, such that the end points of $Z_{1}$ are the same that the end points of $A_{1}$ and for each $i\in\{2,\cdots,n\}$, $Z_{1}\cap A_{i}=A_{1}\cap A_{i}$. 

With this construction, for $y=p\in Y$, $C(p,X)$ and $C(y,Y)$ are homeomorphic but $X$ and $Y$ are not.
\end{proof}

We present now other classes of continua $X$ such that for each $p\in X$, $(X,p)$ has not unique hyperspace $C(p,X)$ in the class of continua.

Recall that, a continuum $X$ is say to be \textit{decomposable} if there exist $A,B\in C(X)-\{X\}$ such that $X=A\cup B$. A continuum $X$ is say to be \textit{indecomposable} if it is not decomposable. Also, $X$ is {\it hereditarily indecomposable (decomposable)} if for each $C\in C(X)$ (and $|C|>1$), $C$ is  indecomposable (decomposable).

Since the Knaster continuum with two end points is indecomposable (see \cite[p. 205]{Kuratowski(1968)}) we have the following result, wich uses a similar construction to the given in the previous theorem.

\begin{theorem}
Let $X$ be a hereditarily decomposable continuum with at least one free arc. For any $p\in X$, there exists a no hereditarily decomposable continuum $Y$ and $q\in Y$ such that $C(p,X)$ and  $C(q,Y)$ are homeomorphic. 
\end{theorem}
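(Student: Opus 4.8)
The plan is to mimic the construction from the previous theorem, replacing a free arc of $X$ by a suitable copy of an indecomposable continuum that does not disturb the hyperspace $C(p,X)$ locally but destroys hereditary decomposability globally. First I would fix a free arc $A\subset X$ with endpoints $a,b$; since $A$ is free, every point in its interior has order $2$ and a neighborhood basis of arcs lying inside $A$. I would take a topological copy $Z$ of the Knaster continuum with two end points $a',b'$ (the same one recalled in the proof of the previous theorem), and form $Y$ by excising $\operatorname{Int}_X(A)$ from $X$ and gluing $Z$ in its place, identifying the end points $a',b'$ of $Z$ with $a,b$ respectively. Because $A$ was free, the attaching is clean: $Y$ is a well-defined continuum, $Z$ meets $\overline{X-A}$ only in $\{a,b\}$, and $Y$ is not hereditarily decomposable since it contains the indecomposable subcontinuum $Z$.

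Next I would locate the point $q\in Y$. One must be slightly careful that $p$ itself need not avoid $A$ — but a hereditarily decomposable continuum with a free arc has at least one free arc whose interior does not contain $p$ (for instance, if $p$ lies in the interior of every free arc then one can shrink the chosen arc so that $p$ becomes an endpoint or lies outside; alternatively, in the degenerate case where $X$ is itself an arc, the statement reduces to the already-recalled fact that $C(a,Z)$ is an arc $\cong C(a,[0,1])$). Assuming $p\notin \operatorname{Int}_X(A)$, set $q=p\in Y$. The heart of the argument is then to show $C(p,X)$ and $C(q,Y)$ are homeomorphic. The key local fact is that, for the Knaster continuum $Z$ with end points $a',b'$, the recalled computation from \cite{Pellicer(2003)} gives $C(a',Z)$ and $C(b',Z)$ are arcs — exactly as $C(a,[0,1])$ and $C(b,[0,1])$ are arcs. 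So the portion of the hyperspace consisting of subcontinua of $Y$ contained in $Z$ (and in particular those containing neither, one, or the relevant end point) matches the portion of $C(X)$ coming from $A$.

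To assemble the homeomorphism $C(p,X)\cong C(q,Y)$ I would decompose each hyperspace according to how a subcontinuum $C$ (with $p\in C$) meets the modified region. Writing $X=A\cup X_0$ and $Y=Z\cup X_0$ where $X_0=\overline{X-A}$, every $C\in C(p,X)$ is determined by its trace $C\cap X_0$ (a subcontinuum of $X_0$ containing $p$) together with, for each end point of $A$ that lies in $C\cap X_0$, a subcontinuum of $A$ emanating from that end point — and the space of such ``emanating subcontinua of $A$ from a fixed end point'' is an arc. The identical bookkeeping applies to $C(q,Y)$ with $Z$ in place of $A$, and since $C(a',Z)$, $C(b',Z)$ are arcs the parameter spaces coincide; gluing these fibered descriptions over the common base $C(p\text{ or trace},X_0)$ yields the desired homeomorphism. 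The main obstacle I anticipate is precisely this last gluing: making rigorous that $C(p,X)$ is a ``pullback'' built continuously from the trace and the arc-valued fibers, and that the replacement $A\rightsquigarrow Z$ is compatible with this structure because both contribute arc fibers at the end points and a single point (the end point itself, viewed as the minimal such subcontinuum) at the gluing locus. This is the same phenomenon exploited in the previous theorem, so I would expect to reuse \cite[Observation 5.18 and Theorem 5.21]{Pellicer(2003)} verbatim and argue the gluing by an explicit continuous matching of order arcs.
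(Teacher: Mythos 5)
Your construction of $Y$ (replace a free arc $A$, chosen with $p\notin\operatorname{Int}_X(A)$, by a Knaster continuum $Z$ with two end points) is exactly the one the paper intends — the paper itself only says ``uses a similar construction to the one given in the previous theorem'' and gives no further detail — and your preliminary reductions (shrinking $A$ to avoid $p$, noting $Y$ contains the indecomposable $Z$) are fine. The gap is in the step you yourself flag as the main obstacle: the fibered/order-arc matching of $C(p,X)$ with $C(q,Y)$ does not produce a continuous map whenever some subcontinuum of $X_0=\overline{X-A}$ contains $p$ together with \emph{both} end points $a,b$ of $A$ (e.g.\ $X=S^1$, or any hereditarily decomposable graph-like continuum in which $A$ lies on a cycle through the region of $p$; such $X$ are allowed by the hypotheses). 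In that situation $C\cap A$ can have two components, one at each end point, and the relevant degeneration in $C(p,X)$ is
$$D\cup\alpha\bigl(\bigl[0,\tfrac12-\tfrac1n\bigr]\bigr)\cup\alpha\bigl(\bigl[\tfrac12+\tfrac1n,1\bigr]\bigr)\ \longrightarrow\ D\cup A ,$$
where $D\in C(p,X_0)$ contains $a$ and $b$: the two fibers merge into the whole arc while \emph{both remain proper} until the limit. Your proposed map sends this sequence to $D\cup W_{1/2-1/n}\cup V_{1/2-1/n}$, where $W_t,V_t$ parametrize the arcs $C(a',Z),C(b',Z)$; its limit is $D\cup W_{1/2}\cup V_{1/2}$, and since $Z$ is indecomposable the union of the two proper subcontinua $W_{1/2},V_{1/2}$ is \emph{never} $Z$. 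So the image sequence does not converge to $D\cup Z$, and the ``explicit continuous matching of order arcs'' is discontinuous precisely at the locus where the two fibers fuse. Equivalently: the parameter space of possible $A$-parts over such a trace is a triangle $\{s\le s'\}$ with its \emph{diagonal} collapsed to the point $A$, while the parameter space of $Z$-parts is a square $C(a',Z)\times C(b',Z)$ with the \emph{L-shaped} set $\{W=Z\}\cup\{V=Z\}$ collapsed to the point $Z$; these quotients are abstractly homeomorphic $2$-cells, but no fiber-preserving (order-arc by order-arc) identification carries one onto the other.

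Your argument is correct, and essentially complete, in the case where no subcontinuum of $X_0$ joins $a$ to $b$ (so that $C\cap A$ is always a single subarc anchored at one end point, or all of $A$, and only arcs $C(a',Z)\cong C(a,A)$ are ever needed); this covers trees, dendrites, and any $X$ in which the chosen free arc is a ``bridge.'' For the general statement you must either (i) choose, when possible, a free arc that is a bridge, (ii) replace the fiberwise map by a global identification of the two hyperspaces (for $X=S^1$ one can check directly that both $C(p,S^1)$ and $C(q,Y)$ are $2$-cells, but the homeomorphism is not the naive one), or (iii) use a different $Y$ altogether in the cyclic case. As written, the claim that ``the parameter spaces coincide'' and that the gluing goes through ``verbatim'' is where the proof breaks.
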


The following two results are applications of \cite{Eberhart(1978)} (see the remark about (4) on page 222 and Theorem 8).

\begin{proposition}
Let $X$ is a finite dimensional continuum such that there exists $q\in X$ in such a way that $X$ locally connected in a neighborhood of $q\in X$ and $\dim_{\{q\}}C(q,X)=\infty$. Then for each $p\in X$ there exists a continuum $Y$ and $y\in Y$ such that $C(p,X)$ and $C(y,Y)$ are homeomorphic but $X$ and $Y$ are not.
\end{proposition}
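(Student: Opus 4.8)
The plan is to mimic the construction used in the previous theorems, replacing a suitable piece of $X$ by a continuum that is locally homeomorphic to $X$ near the relevant point but globally different. Concretely, since $X$ is finite dimensional, locally connected near $q$, and $\dim_{\{q\}}C(q,X)=\infty$, the point $q$ behaves like a semi-hairy point: by \cite[Theorem 4, p.~221]{Eberhart(1978)} (the same result invoked in Proposition \ref{interiorgraph}) the local structure of $C(q,X)$ at the subcontinuum $\{q\}$ is that of a Hilbert cube factor. The first step is therefore to isolate, for the given $p\in X$, a small subcontinuum $N$ of $X$ with $p\notin N$ that contains $q$ in its interior (relative to $X$) and on which $C(q,N)$ is already a Hilbert cube; local connectedness near $q$ together with $\dim_{\{q\}}C(q,X)=\infty$ guarantees such an $N$ exists.

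Next I would build $Y$ by excising $\mathrm{Int}_X(N)$ (away from $p$) and gluing in a different continuum $N'$ along $Fr_X(N)$, chosen so that $q$ (or its image $q'\in N'$) still has $\dim_{\{q'\}}C(q',N')=\infty$ and $C(q',N')$ is again a Hilbert cube, but so that $Y\not\cong X$ — for instance by making $N'$ have a different number of arc components emanating from the attaching points, exactly as the hairy-point-versus-null-comb dichotomy in Example \ref{examples} or the Knaster substitution in the preceding theorem. The key homeomorphism-of-hyperspaces step then runs as in Proposition \ref{nopeludos}: one writes $C(p,X)$ as built from the pieces $C(\overline{pz},X)$ where $z$ ranges over the boundary points through which one reaches $N$, notes that the portion of the hyperspace corresponding to subcontinua that meet $\mathrm{Int}_X(N)$ is a Hilbert cube (by Eberhart's theorem and the remark about (4) on p.~222), and observes that the same decomposition of $C(q',Y)$ produces a Hilbert-cube piece glued along the identical "frame." Since a Hilbert cube absorbs a Hilbert cube, $C(p,X)\cong C(q',Y)$.

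The main obstacle I anticipate is making the gluing and the hyperspace decomposition precise enough that the homeomorphism $C(p,X)\cong C(q',Y)$ is genuinely canonical rather than merely plausible: one must verify that the "rest" of the hyperspace (subcontinua not entering the swapped region) is untouched, that the swapped region contributes a Hilbert-cube-with-boundary factor in both cases, and that these glue consistently — this is precisely where the cited remark about (4) on page 222 of \cite{Eberhart(1978)} does the heavy lifting and must be applied with care to a finite-dimensional, not-necessarily-locally-connected ambient continuum. A secondary point to check is that one can always choose $N'$ so that $Y\not\cong X$; if $X$ has more than one point of infinite local hyperspace dimension this needs a uniqueness-of-local-type argument analogous to Lemmas \ref{nonulo} and \ref{free_arcs}, distinguishing $X$ from $Y$ by the local structure at the altered point (e.g. order, or the local arc/graph structure), which is preserved by any homeomorphism.
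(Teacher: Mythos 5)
Your excision-and-replacement scheme is a genuinely different, and much heavier, route than the paper's, and as written it has concrete gaps. The paper removes nothing from $X$: it simply attaches a copy of the Hilbert cube $\mathcal{Q}$, say $K$ with $X\cap K=\{q\}$, and takes $Y=X\cup K$ with $y=p$. Then $Y\not\cong X$ comes for free, because $Y$ contains $\mathcal{Q}$ and is therefore infinite dimensional while $X$ is finite dimensional by hypothesis; no analysis of local types, orders, or arc components is needed. The identification of $C(p,Y)$ with $C(p,X)$ is exactly where the hypotheses that $X$ is locally connected near $q$ and $\dim_{\{q\}}C(q,X)=\infty$ enter, via the remark about (4) on p.~222 and Theorem~8 of \cite{Eberhart(1978)}: the subcontinua of $Y$ that spill into $K$ are absorbed by the Hilbert-cube local structure that $C(\,\cdot\,,X)$ already has at the subcontinua through $q$.

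Two specific problems with your version. First, you require a subcontinuum $N$ containing $q$ in its interior with $p\notin N$; this is impossible when $p=q$, a case the statement allows and the additive construction handles without change. Second, the heart of your argument, namely that after cutting out $\mathrm{Int}_X(N)$ and gluing in a different $N'$ the hyperspace anchored at $p$ is unchanged, is precisely the step you leave unproved, and it is not a routine application of Eberhart: subcontinua containing $p$ that pass through or terminate inside the swapped region are genuinely altered, so the ``rest'' of the hyperspace is not untouched, and $X$ is not assumed locally connected away from $q$, so you have little control there. (You also end up comparing $C(p,X)$ with $C(q',Y)$; the statement requires a point $y$ playing the role of $p$, not of $q$.) Finally, guaranteeing $Y\not\cong X$ in your setup needs the extra invariance argument you sketch at the end, whereas attaching $\mathcal{Q}$ reduces it to a one-line dimension count.
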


\begin{proposition}\label{dendroid}
Let $X$ is a smooth dendroid at $q\in X$ such that $q$ is not in the interior of a finite tree in $X$. Then for each $p\in X$ there exists a continuum $Y$ and $y\in Y$ such that $C(p,X)$ and $C(y,Y)$ are homeomorphic but $X$ and $Y$ are not $Y$.
\end{proposition}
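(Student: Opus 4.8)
The plan is to adapt the two ``substitution'' constructions already used in this section---replacing a free arc by a Knaster continuum with two end points (as in the theorem about finite graphs) and converting a free arc into a null comb (as in Proposition~\ref{nopeludos})---with the role of the distinguished point played by $q$, and with Eberhart's results supplying the structural information at $q$ that, in the dendrite case, came from the notion of a semi-hairy point.

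The first step I would carry out is to extract from \cite[Theorem~8 and the remark about (4), p.~222]{Eberhart(1978)} the fact that drives everything: since $X$ is a smooth dendroid at $q$ and $q$ is not in the interior of a finite tree in $X$, $C(q,X)$ is homeomorphic to $\mathcal{Q}$, and moreover $C(\overline{qr},X)$ is homeomorphic to $\mathcal{Q}$ for every $r\in X$; the last assertion follows from the first because collapsing the arc $\overline{qr}$ produces again a dendroid which is smooth at the image point $\ast$ and in which $\ast$ is still not interior to a finite tree, while $C(\overline{qr},X)$ is naturally homeomorphic to $C(\ast,X/\overline{qr})$. Granting this, I would fix $p\in X$ and form $Y$ by attaching to $X$, at the point $q$ itself, a copy $Z$ of the Knaster continuum with two end points, identifying one end point of $Z$ with $q$; set $y=p\in Y$. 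Since $Z$ is indecomposable, $Y$ is not hereditarily decomposable, so it is not homeomorphic to the dendroid $X$. Because $C(q,Z)$ is an arc, $C(y,Y)$ is obtained from $C(p,X)$ by gluing a copy of $C(\overline{pq},X)\times[0,1]$ along $C(\overline{pq},X)\times\{0\}$; as $C(\overline{pq},X)$ is a Hilbert cube, one expects this operation not to change the homeomorphism type, so that $C(p,X)$ and $C(y,Y)$ are homeomorphic. (When $X$ already has a free arc $J$ with $p\notin J$, a more elementary alternative is to replace $J$ by a copy of the Knaster continuum with two end points exactly as in the finite-graph theorem; then the hyperspace is literally unchanged, since $C(a,Z)$ is an arc, and one only needs $X\not\cong Y$, which is clear.)

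The main obstacle is the verification that the modification is hyperspace-neutral, i.e.\ that attaching $C(\overline{pq},X)\times[0,1]$ along $C(\overline{pq},X)$ returns a space homeomorphic to $C(p,X)$: one cannot merely invoke ``collar absorption'', because $C(\overline{pq},X)=\{A\in C(p,X):q\in A\}$ has nonempty interior in $C(p,X)$ (the subcontinua through $p$ that strictly overshoot $q$), so the gluing locus is not a $Z$-set and near its frontier one sees three Hilbert-cube sheets meeting along a common face; it is precisely Eberhart's analysis of $C(p,X)$ for smooth dendroids that is needed to see that, in the infinite-dimensional setting, this three-sheeted gluing is still absorbed into a single Hilbert-cube factor. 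A secondary point of care is the robustness of the non-homeomorphism: the ``Knaster tail at $q$'' move is the safe one for dendroids with no free arc at all (such as the Cantor fan), since the appearance of an indecomposable subcontinuum distinguishes $Y$ from $X$ regardless of where $p$ lies, whereas the free-arc substitution presupposes a free arc avoiding $p$.
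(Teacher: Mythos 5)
Your strategy is the same one the paper uses---modify $X$ only ``beyond'' the point $q$, observe that every subcontinuum of the new space $Y$ through $p$ that leaves $X$ must contain $\overline{pq}$, and then exploit the fact that $C(\overline{pq},X)$ is a Hilbert cube so that the extra subcontinua are absorbed---but the attached piece is different. The paper attaches a copy of $\mathcal{Q}$ at $q$ (remarking that for this proposition a $2$-cell suffices) and takes $y=p$, so the extra part of $C(y,Y)$ is $C(\overline{pq},X)\times C(q,K)\cong\mathcal{Q}\times\mathcal{Q}$ glued along $C(\overline{pq},X)\times\{\{q\}\}$, and non-homeomorphism of $X$ and $Y$ follows from dimension; you attach a Knaster continuum by an end point, so the extra part is $C(\overline{pq},X)\times[0,1]$ glued along $C(\overline{pq},X)\times\{0\}$, and non-homeomorphism follows from the presence of an indecomposable subcontinuum in $Y$ (dendroids being hereditarily decomposable). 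Both variants of the non-homeomorphism argument are sound, and your derivation of $C(\overline{pq},X)\cong\mathcal{Q}$ by collapsing $\overline{pq}$ is a reasonable substitute for the paper's citation of the remark about (4) in Eberhart's paper. Your parenthetical free-arc alternative is exactly the construction of the preceding theorem on finite graphs and is fine when a free arc avoiding $p$ exists, but it is not available for, say, the Cantor fan, so the attachment at $q$ is the construction that covers all cases.

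The one genuine issue is the step you yourself flag: you do not actually prove that the gluing along $C(\overline{pq},X)$ leaves the homeomorphism type of $C(p,X)$ unchanged, and your observation that $C(\overline{pq},X)$ can fail to be a $Z$-set in $C(p,X)$ (it has nonempty interior there) is correct, so a naive collar-absorption argument does not apply. You should be aware that the paper does not close this gap either: its entire justification is the one-line assertion that ``$C(p,X)=C(p,Y)$,'' which is not even literally true as stated (the two hyperspaces differ as sets) and is offered without an argument for the claimed homeomorphism when $p\neq q$. So your proposal is neither weaker nor stronger than the paper's proof at the critical point; it reduces the statement to the same unproved absorption claim, with the merit of identifying it explicitly as the place where Eberhart-style infinite-dimensional topology (or a cell-like/near-homeomorphism argument for the retraction $A\mapsto A\cap X$) would have to be invoked.
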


In both propositions, the construction of $Y$ is done by just attaching a copy of $\mathcal{Q}$, say $K$, at $q\in X$,  in such a way that $X\cap K=\{q\}$. It follows that $C(p,X)=C(p,Y)$. Note that, in the case of Proposition \ref{dendroid}, it is enough to attach a $2$-cell instead of a copy of $\mathcal{Q}$.

In \cite{Pellicer(2003)}, an \textit{end point of a continuum $X$} means a point $p$ such that $C(p,X)$ is an arc (note that this definition of end point is quite different to the one given here). Using this definition of end point, we can obtain other classes of continua without unique hyperspace $C(p,X)$. Thus classes of continua are summarized in the next theorem, which is an easy application of \cite[Example 1.1, p. 2]{Corona.et.al(2019)}, \cite[Theorem 5.22 and Corollary 5.24]{Pellicer(2003)} and the fact that $C(0,[0,1])$ is an arc and $C((1,0),S^{2})$ is a two cell. 

\begin{theorem}
If $X$ belongs to some of one of the following classes of continua: 
\begin{itemize}
\item hereditarily indecomposable,
\item indecomposable without end points and with all proper and nondegenerate subcontinua being arcs,
\item indecomposable with exactly two end points and with all proper and nondegenerate subcontinua being arcs, 
\end{itemize}
then, for each $p\in X$, $(X,p)$ has not unique hyperspace $C(p,X)$.
\end{theorem}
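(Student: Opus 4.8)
The plan is to handle each of the three classes by exhibiting, for an arbitrary $p\in X$, a continuum $Y\not\cong X$ together with a point $q\in Y$ with $C(p,X)\cong C(q,Y)$, using the ``attaching'' technique employed earlier in the section together with the cited results of Pellicer from \cite{Pellicer(2003)} and the example from \cite[Example 1.1]{Corona.et.al(2019)}. First I would recall that by \cite[Theorem~5.22]{Pellicer(2003)} every hereditarily indecomposable continuum has at least one end point in the sense of Pellicer, i.e. a point $e$ with $C(e,X)$ an arc; more precisely, for each point $p$ of a hereditarily indecomposable continuum, $C(p,X)$ is an arc (this is essentially the statement that such continua behave like ``generalized arcs'' from the hyperspace point of view, cf. \cite[Corollary~5.24]{Pellicer(2003)}). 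Hence for the first class we may take $Y=[0,1]$ and $q=0$: since $C(0,[0,1])$ is an arc and $C(p,X)$ is an arc for every $p\in X$, the hyperspaces are homeomorphic, while $X$, being hereditarily indecomposable and nondegenerate, is certainly not an arc. That disposes of the first bullet.

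For the second and third classes I would use the same comparison but now invoke \cite[Example~1.1, p.~2]{Corona.et.al(2019)}, which (as referenced in the statement) exhibits a model continuum whose relevant hyperspaces $C(\cdot,\cdot)$ realize the shapes ``arc'' and ``$2$-cell''. The key structural input is again from \cite{Pellicer(2003)}: if $X$ is indecomposable with all proper nondegenerate subcontinua arcs, then for a point $p$ that is \emph{not} an end point of $X$ (in Pellicer's sense), $C(p,X)$ is a $2$-cell, whereas at an end point $C(p,X)$ is an arc. In the class with no end points, every point gives a $2$-cell, so we compare with $Y=S^{2}$ (or a $2$-cell) and $q=(1,0)$, noting $C((1,0),S^{2})$ is a $2$-cell by the fact cited in the theorem's preamble, and $X\not\cong S^{2}$ since $X$ is one-dimensional and indecomposable. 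In the class with exactly two end points, we split into two subcases according to whether $p$ is one of the two end points (then $C(p,X)$ is an arc, compare with $Y=[0,1]$, $q=0$) or not (then $C(p,X)$ is a $2$-cell, compare with $Y=S^{2}$, $q=(1,0)$); in each subcase the target $Y$ is neither indecomposable nor has the prescribed subcontinuum structure, so $X\not\cong Y$.

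Thus the proof reduces to: (i) citing \cite[Theorem~5.22 and Corollary~5.24]{Pellicer(2003)} to identify, for each point $p$ in each of the three classes, whether $C(p,X)$ is an arc or a $2$-cell; (ii) citing the fact that $C(0,[0,1])$ is an arc and $C((1,0),S^{2})$ is a $2$-cell; and (iii) observing in each case that the chosen witness $Y$ cannot be homeomorphic to $X$ for elementary reasons (dimension, (in)decomposability, or local structure). The main obstacle I anticipate is purely bookkeeping rather than mathematical: making sure the cited results of Pellicer really do cover \emph{every} point $p\in X$ in each class (in particular that there are no ``intermediate'' points whose hyperspace $C(p,X)$ is neither an arc nor a $2$-cell), and correctly matching each such $p$ to a witness $Y$ whose distinguished point $q$ produces the same hyperspace. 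Once the case analysis on the type of $p$ is pinned down against \cite[Theorem~5.22, Corollary~5.24]{Pellicer(2003)}, the rest is immediate.
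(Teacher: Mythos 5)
Your proposal is correct and follows essentially the same route as the paper, which gives no explicit proof but states the theorem is an easy application of \cite[Example 1.1]{Corona.et.al(2019)}, \cite[Theorem 5.22 and Corollary 5.24]{Pellicer(2003)}, and the facts that $C(0,[0,1])$ is an arc and $C((1,0),S^{2})$ is a $2$-cell. Your case analysis (arc at Pellicer end points, $2$-cell elsewhere, with $[0,1]$ and $S^{2}$ as the respective witnesses) is exactly the intended argument.
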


We finish this paper with some open questions.

\begin{question}
If $X$ is a tree and $p\in X$, it is true that $(X,p)$ has unique hyperspace $C(p,X)$ in the class of dendroids?
\end{question}

\begin{question}
If $X$ is a dendroid such that $(X,p)$ has unique hyperspace for some $p\in X$, should $X$ be a tree?
\end{question}

\begin{question}
Is the condition of smoothness necessary in Proposition \ref{dendroid}?
\end{question}

\textbf{Acknowledgments.}  The authors wish to thank Eli Vanney Roblero and Rosemberg Toal\'a for the fruitful discussions. We also want to thank Professors Fernando Mac\'ias Romero and David Herrera Carrasco for asking the questions wich motivated us to write down this paper.

\end{document}